\DeclareSymbolFont{AMSb}{U}{msb}{m}{n}
\DeclareSymbolFontAlphabet{\Bbb}{AMSb}
\newtheorem{teo}{Theorem}[section]
\newtheorem{lem}[teo]{Lemma}
\newtheorem{cor}[teo]{Corollary}
\newtheorem{prop}[teo]{Proposition}
\newtheorem{rem}[teo]{Remark}
\newcommand{\norm}[1]{\left\Vert\,#1\right\Vert}
\newcommand{\C}{\mathbb{C}}
\newcommand{\R}{\mathbb{R}}
\newcommand{\N}{\mathbb{N}}
\newcommand{\NN}{\mathcal{N}}
\def\be{\begin{equation}}
\def\ee{\end{equation}}
\def\bq{\begin{eqnarray}}
\def\eq{\end{eqnarray}}
\def\beq{\begin{eqnarray}}
\def\eeq{\end{eqnarray}}
\def\ba{\begin{array}}
\def\ea{\end{array}}
\def\bi{\begin{itemize}}
\def\ei{\end{itemize}}
\newcommand{\la}{\lambda}
\newcommand{\su}{\; + \;}
\newcommand{\re}{\; - \;}
\newcommand{\ig}{\; = \;}
\newcommand{\ab}[1]{|#1|}
\title{The Discrete and Continuous Markus--Yamabe Stability Conjectures}
\author{\'Alvaro Casta\~neda and V\'ictor Gu\'i\~nez
\thanks{The authors were supported in part by FONDECYT Grant
\#1080172, by CONICYT Grant PBCT ADI 17, and by MATH-AMSUD DySET.
The first author was also supported by FONDECYT Postdoctoral Grant
\#3100082 and Mecesup PUC-0711.}}
\date{}
\begin{document}

\maketitle

\begin{abstract}

We study  the discrete and continuous versions of the Markus--Yamabe
Conjecture for polynomial vector fields in $ \R^n $ (especially when
$ n = 3 $) of the form $ X = \lambda \, I + H $ where $ \lambda $ is
a real number, I  the identity map, and H  a map with nilpotent
Jacobian matrix $ JH .$ We consider  the case where the rows of $ J
H $ are linearly dependent  over $ \R $ and that where they are
linearly independent over $ \R $. In the former, we find
non--linearly triangularizable vector fields $ X $
 for which the origin is a global attractor for both the continuous and the discrete dynamical systems generated by $ X $.
 In the independent continuous case, we present a family of vector fields which  have orbits escaping to infinity.
 In the independent discrete case, we present a large family of vector fields which have a periodic point of period 3.
\end{abstract}

\section{Introduction}

Let $ F: \R^n \to \R^n $ be a $C^1-$vector field with $F(p) = 0.$
Consider the differential system
\begin{equation} \label{1}
\dot{x} = F(x) \, .
\end{equation}
We let  $\phi(t,x)$ denote the solution of (\ref{1}) with initial
condition $\phi(0,x) = x $.  We say that $p$ is a {\em global
attractor} of the differential system (\ref{1}) if for each $ x \in
\R^n $, we have that $\phi(t,x)$ is defined for all $t > 0$ and
tends to $p$ as $t$ tends to infinity.

In \cite{MY}, L. Markus and H. Yamabe establish their well known
global stability conjecture.

\bigskip

\noindent {\bf{The Markus--Yamabe Conjecture (MYC).}} Let $ F: \R^n
\to \R^n$ be a $C^1-$ vector field with $ F(0) = 0 $. If for any $ x
\in \R^n$ all  the eigenvalues of  the Jacobian of $ F $ at $ x $
have negative real part, then the origin is a global attractor of
the differential system~(\ref{1}).

The corresponding version of the MYC for discrete dynamical systems is as follows. Let $ F: \R^n \to \R^n $ be a $C^1-$vector field. Consider the sequence
\begin{equation} \label{2}
x^{(m+1)} = F(x^{(m)}) \, , \quad x^{(0)} \in \R^n \, .
\end{equation}
Consider also the dynamics of the iterations of $ F .$ Let $ p $ be
a fixed point of $ F $, that is, $ F(p) = p $. We say that $ p $ is
a {\em global attractor} of the discrete dynamical system~(\ref{2})
if the sequence $ x^{(m)} $  tends to $ p $ as $ m $ tends to
infinity, for any $ x^{(0)} \in \R^n $.

\bigskip

\noindent {\bf{The Discrete Markus--Yamabe Conjecture (DMYC).}} Let $ F: \R^n
\to \R^n$ be a $C^1-$vector field with $ F(0) = 0 $. If for any $ x
\in \R^n$ all  the eigenvalues of  the Jacobian of $ F $ at $ x $
have modulus less than one, then the origin is a global attractor of the discrete dynamical system~(\ref{2}) generated by $ F $.

\bigskip

It is known that the MYC (resp. the DMYC) is true when $ n \leq 2 $
(resp. $ n = 1 $) and false when $ n \geq 3 $ (resp. $ n \geq 2 $),
though  both conjectures are true for triangular vector fields in
any dimension. For these vector fields, L. Markus and H. Yamabe
prove the continuous case  in \cite{MY}, and A. Cima et al. prove
the discrete case in \cite{CGM2}. For polynomial vector fields, the
DMYC is also true when $ n = 2 $ (see \cite{CGM2}), though both
conjectures are false  when $ n \geq 3 $. For an example of a pair
of polynomial vector fields, of which one satisfies the MYC
hypotheses and the other  the DMYC hypotheses, having both vector
fields orbits that escape to infinity, see \cite{CEGMH}. Further in
~\cite{CGM1}, A. Cima et al. obtain a family of polynomial
counterexamples containing the preceding pair.

\medskip
In this paper we  study both conjectures in the case of a special
family of polynomial vector fields in $ \R^n $, focusing on  $ n = 3
$. Given a real number $ \lambda $  and a positive integer $ n $, we
denote the set consisting of the polynomial vector fields in $ \R^n
$ of the form $ F = \lambda I + H $, where $ I $ is the identity map
and $ H $ has nilpotent Jacobian matrix at every point, by $
\mathcal{N}(\lambda,n) $.  Note that for this class of vector
fields, the Jacobian matrix at each $ x \in \R^n $ has all its
eigenvalues  equal to $ \lambda $. Therefore, a vector field $ F =
\lambda I + H $ in $ \mathcal{N}(\lambda,n) $ satisfies the MYC
(resp. the DMYC) hypotheses if and only if $ \lambda < 0 $ (resp. $
\ab{\lambda} \, < \, 1 $). The counterexamples of \cite{CGM1} are,
basically, vector fields $ X = \lambda I + H $ in $
\mathcal{N}(\lambda,3) $ where $ H $ is a quasi--homogeneous vector
field of degree one.  In \cite{GC}, we give examples of vector
fields in $ \mathcal{N}(\lambda,3) $ which are linearly
trianguralizable (that is, triangular after a linear change of
coordinates). For these vector fields, the MYC (resp. the DMYC) is
true when $ \lambda < 0 $ (resp.  $ \ab{\lambda} < 1 $). Further,
the paper contains a family of counterexamples to the MYC which
generalizes that of Cima--Gasull--Ma\~nosas.

Polynomial vector fields $ H $ defined on $ \R^n $ and on $ \C^n $
with nilpotent Jacobian matrix at every point have been extensively
studied from the algebraic geometry viewpoint (see for example \cite{vE}). In this paper we make use of some aspects of this theory.

 The examples and counterexamples $ X = \lambda \, I + H \in \mathcal{N}(\lambda,n) $  of above have one
common characteristic, namely the rows of $ JH $ are linearly dependent over $ \R $. Thus we are led  to introducing the sets $ \mathcal{N}_{ld}(\lambda,n) $ and $ \mathcal{N}_{li}(\lambda,n) $. The first is the set consisting of the polynomial vector
fields $ X = \lambda \, I + (H_1,\dots,H_n) $ in $ \mathcal{N}(\lambda,n) $
 such that $ \{H_1,\dots,H_n\} $ is linearly dependent over $ \R $. The second set is
 $ \mathcal{N}_{li}(\lambda,n) = \mathcal{N}(\lambda,n) - \mathcal{N}_{ld}(\lambda,n) $. Section 2 studies the linearly dependent case, especially when the dimension is three. We give a normal form for the vector fields of $ \mathcal{N}_{ld}(\lambda,3) $ (see Proposition~\ref{prop2.2}) and characterize  those  elements
which are linearly triangularizable (see Theorem~\ref{litri}). The normal form depends on a polynomial $ f(t) $ with coefficients in $ \R[z] $. In the case $ f(t) $ is a polynomial of degree one, we show that the corresponding vector fields satisfy  both conjectures (see Theorems~\ref{CY1} and \ref{DY1}). We thus  obtain  a family of non--linearly triangularizable vector fields in $ \mathcal{N}_{ld}(\lambda,3) $   for which both conjectures are true. To our knowledge, there are no examples as the preceding one in the literature. In the case the degree of $ f(t) $ is greater than one, we  give a new family of counterexamples to both the MYC and the DMYC (see Proposition~\ref{Y2}). The foregoing considerations lead us to  raising the question, Do there exist vector fields in $ \mathcal{N}_{ld}(\lambda,3) $ with the degree of $ f(t) $ greater than one for which  the MY Conjecture, or the DMY Conjecture, or both, are true? The section concludes showing that,  for a vector field $ X \in  \mathcal{N}_{ld}(\lambda,3) $, in order for the origin not to be a global attractor the vector field must have at least one orbit which escapes to infinity (see Theorem~\ref{period}).

In Section 3 we deal with the linearly independent case. We state
the Dependence Problem and the Generalized Dependence Problem
introduced by A. van den Essen in \cite[Chapter 7]{vE},  among
others, and we obtain a family of examples $ F_{n,r} = \lambda \, I
+ H_{n,r} $ in $ \mathcal{N}_{li}(\lambda,n) $ for any dimension $ n
\geq 3 $, with $ \textrm{rk} \; J H_{n,r} = r \geq 2 $. When $ n
\geq 3 $ and $ r = 2 $, we show that both conjectures are false for
these vector fields (see Theorem~\ref{liorsc}). Subsequently, we
consider vector fields $ X = \lambda \, I + H \in
\mathcal{N}_{li}(\lambda,3) $, where $ H(x,y,z) = (u(x,y,z),
v(x,y,z), h(u(x,y,z), v(x,y,z))) $. For a characterization of a
large class of these vector fields $ H ,$ see \cite{ChE}. The
characterization depends on a polynomial map $ g(t) $. In the case $
g(t) $ is a polynomial of degree less than or equal to two, we show
that the vector field $ X = \lambda \, I + H $, with $ \lambda < 0
,$ has orbits that escape to infinity (see Theorem~\ref{teo1li}). On
the other hand, in the discrete case, for $ |\lambda| < 1 $, these
maps have a periodic point of period three (see
Theorem~\ref{teo3.9}). Therefore, the DMYC is false for this class
of maps. This scenario leads to posing the following question. Do
there exist vector fields in $ \mathcal{N}_{li}(\lambda,3) $ for
which the MY Conjecture, or the DMY Conjecture, or both, are true?

\section{The linearly dependent case}

Given a linear isomorphism  $ T : \R^n \to \R^n $ and a vector field $ F = \lambda \, I + H \in \mathcal{N}(\lambda,n) $, we have
 \begin{equation} \label{camco} T_* F  = T \circ F \circ T^{-1} = \lambda \, I \su  T \circ H \circ T^{-1} \in \mathcal{N}(\lambda,n) \,
 \end{equation}
which implies that the set $ \mathcal{N}(\lambda,n) $ is invariant
by linear changes of coordinates as both a continuous dynamical
system  and  a discrete dynamical system. Moreover, the vector field
obtained after  a linear change of coordinates  is the same in both
the continuous and the discrete cases.

\medskip

Let $ \mathcal{N}_{ld}(\lambda,n) $ be the set consisting of the polynomial vector
fields $ X = \lambda \, I + (H_1,\dots,H_n) $ in $ \mathcal{N}(\lambda,n) $
 such that $ \{H_1,\dots,H_n\} $ is linearly dependent over $ \R $. Let $ \mathcal{N}_{li}(\lambda,n) = \NN(\lambda, n) - \mathcal{N}_{ld}(\lambda,n)$. Note that the examples and counterexamples of \cite{GC} belong to the set $ \mathcal{N}_{ld}(\lambda,3) $.
The following gives  properties of these sets.
\begin{prop}{\label{prop2.1}}
\begin{enumerate}
\item[1)] The sets $ \mathcal{N}_{ld}(\lambda,n) $ and $ \mathcal{N}_{li}(\lambda,n) $ are invariant by linear changes of coordinates.
\item[2)] Let $ X = \lambda \, I + (H_1,\dots,H_n) \in \mathcal{N}(\lambda,n) $
be such that $ X(0) = 0 $. Then $ X \in  \mathcal{N}_{ld}(\lambda,n) $ if and only if the rows  of the Jacobian
matrix \linebreak $ J(H_1,\dots,H_n) $  are linearly dependent over $ \R $.
\item[3)] Let $ X \in  \mathcal{N}_{ld}(\lambda,n) $. Then there exists a $ T \in Gl_n(\R) $ such that $ T_* X \ig \lambda \, I \su (H_1,\dots,H_{n-1},0) $. Moreover, for any $ a \in \R $, the vector field $ X_a : \R^{n-1} \to \R^{n-1} $ defined by $ X_a(x_1,\dots,x_{n-1}) = \lambda \, (x_1,\dots,x_{n-1}) \su (H_1,\dots,H_{n-1})((x_1,\dots,x_{n-1},a) $ belongs to $ \mathcal{N}(\lambda,n-1) $.
\end{enumerate}
\end{prop}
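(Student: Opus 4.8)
The plan is to handle the three parts in order, using the transformation law (\ref{camco}) as the backbone of parts 1) and 3) and a gradient/connectedness argument for part 2).

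For part 1), I would start from (\ref{camco}), which already shows $\NN(\lambda,n)$ is invariant, so it suffices to track the tuple $(H_1,\dots,H_n)$. Writing $\tilde H = T\circ H\circ T^{-1}$, its components are $\tilde H_i=\big(\sum_j T_{ij}H_j\big)\circ T^{-1}$. The key observation is that precomposition with the fixed linear isomorphism $T^{-1}$ is injective on polynomial functions and sends $\sum_i c_i H_i$ to $\sum_i c_i(H_i\circ T^{-1})$, hence preserves every $\R$-linear relation; and postcomposition with the invertible matrix $T$ amounts to an isomorphism on the coefficient side, so it does not change the dimension of the $\R$-span of the tuple. Thus $\{\tilde H_1,\dots,\tilde H_n\}$ and $\{H_1,\dots,H_n\}$ have spans of equal dimension, and linear dependence is preserved. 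Since $\NN_{li}(\lambda,n)$ is the complement of $\NN_{ld}(\lambda,n)$ inside the invariant set $\NN(\lambda,n)$, it is invariant as well.

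For part 2), the forward implication is immediate and needs no hypothesis on $X(0)$: differentiating a relation $\sum_i c_i H_i\equiv 0$ gives $\sum_i c_i\nabla H_i\equiv 0$, a dependence of the rows of $J(H_1,\dots,H_n)$. For the converse, a dependence $\sum_i c_i\nabla H_i\equiv 0$ says $\nabla\big(\sum_i c_i H_i\big)\equiv 0$, so $\sum_i c_i H_i$ is constant on the connected set $\R^n$; this is exactly where $X(0)=0$ (equivalently $H(0)=0$) enters, forcing that constant to equal $\sum_i c_i H_i(0)=0$, whence $\sum_i c_i H_i\equiv 0$.

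For part 3), I would first produce the normal form. Linear dependence gives $(c_1,\dots,c_n)\neq 0$ with $\sum_j c_j H_j\equiv 0$; completing this nonzero covector to an invertible $T\in Gl_n(\R)$ whose last row is $(c_1,\dots,c_n)$, the last component of $T\circ H$ is $\sum_j c_j H_j\equiv 0$, and composing on the right with $T^{-1}$ leaves it zero, so $T_*X=\lambda I+(\tilde H_1,\dots,\tilde H_{n-1},0)$. The substantive point is the ``moreover'': writing the nilpotent Jacobian of the new $H$ in block form $\left(\begin{smallmatrix} A & b \\ 0 & 0\end{smallmatrix}\right)$, with the bottom row vanishing because the last component is $0$, its characteristic polynomial factors as $(-t)\det(A-tI)$, so nilpotency of the full Jacobian forces $A$ to be nilpotent at every point of $\R^n$. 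Freezing $x_n=a$ gives $JX_a=A|_{x_n=a}$, nilpotent for each $a$, so $X_a\in\NN(\lambda,n-1)$.

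The linear-algebra bookkeeping in part 1) and the construction of $T$ in part 3) are routine; the one genuinely load-bearing step is the block-triangular eigenvalue computation, where nilpotency of the restriction is inherited from nilpotency of the ambient map. The only thing I expect to watch is that this inheritance must hold pointwise along the entire hyperplane $x_n=a$, which it does precisely because $JH$ is nilpotent everywhere by the defining hypothesis of $\NN(\lambda,n)$.
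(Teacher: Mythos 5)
Your proof is correct and, where the paper gives an argument at all, it follows the same route: your construction in part 3) of a $T$ whose last row is the dependency covector is exactly the paper's change of coordinates, and the block-triangular characteristic-polynomial computation supplies the details behind the paper's ``the assertion follows easily.'' For parts 1) and 2) the paper only writes ``clear'' and cites \cite[Exercise 7.1.1]{vE}, respectively; your kernel/span bookkeeping under $H\mapsto T\circ H\circ T^{-1}$ and your gradient-plus-connectedness argument (with $X(0)=0$ forcing the constant $\sum_i c_iH_i$ to vanish) are the standard complete proofs of exactly those claims, so there is no substantive divergence.
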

\begin{proof} Assertion 1) is clear. Assertion 2) follows from \cite[Exercise 7.1.1]{vE}.  Concerning  assertion 3), let $ X = \lambda \, I + (G_1,\dots,G_n) \in \mathcal{N}_{ld}(\lambda,n) $, and let
$ (\alpha_1,\dots,\alpha_n) \in \R^n - \{(0,\dots,0\} $ be such that $ \alpha_1 \, G_1 + \cdots + \alpha_n \, G_n \equiv 0 $. Without loss of
generality, we may suppose  $ \alpha_n \neq 0 $. Consider the
linear change of coordinates
$$ (x_1,\dots,x_n) \ig T(u_1,\dots,u_n) = (u_1,\dots,u_{n-1},\alpha_1 \, u_1 + \cdots + \alpha_n \,
u_n) \, . $$ Then  $ T_* X \ig \lambda \, I \su (H_1,\dots,H_{n-1},0) $ and the assertion follows easily.
\end{proof}
Our next result gives a normal form for  the vector fields in  $ \mathcal{N}_{ld}(\lambda,3) $. For a  proof, see  for example \cite[Corollary 1.1]{ChE}.

\begin{prop}{\label{prop2.2}} Let $ X = \lambda \, I + (S,U,V) \in \mathcal{N}_{ld}(\lambda,3)$.
Then there exists a $ T \in Gl_3(\R) $ such that $ T_* X \ig \lambda \,
I \su (P,Q,0) $ where
\begin{eqnarray} \label{normal}
P(x,y,z) & = &  -b(z) \, f(a(z) \, x + b(z) \, y) \su c(z) \quad \textrm{and} \nonumber \\
Q(x,y,z) & = &   a(z) \, f(a(z) \, x + b(z) \, y) \su d(z)
\,
\end{eqnarray}
with $ a, b, c, d \in \R[z] $ and $ f \in \R[z][t] $.
\end{prop}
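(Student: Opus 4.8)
The plan is to reduce to the case where the third component already vanishes, translate the nilpotency of the Jacobian into two scalar identities, and then recognise those identities as saying that the planar part of $H$ is the Hamiltonian field of a polynomial with degenerate Hessian; the normal form~(\ref{normal}) then falls out of the classical structure theorem for such polynomials.

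First I would apply Proposition~\ref{prop2.1}(3) to obtain $T \in Gl_3(\R)$ with $T_* X = \lambda\, I + (H_1,H_2,0)$, and henceforth work with $(H_1,H_2,0)$, which still has nilpotent Jacobian since $\mathcal{N}(\lambda,3)$ is invariant under~(\ref{camco}). Because the last row of $J(H_1,H_2,0)$ is zero, its characteristic polynomial factors as $t\,\bigl(t^2 - \tau\, t + \delta\bigr)$, where $\tau = \partial_x H_1 + \partial_y H_2$ and $\delta = \partial_x H_1\,\partial_y H_2 - \partial_y H_1\,\partial_x H_2$ are the trace and determinant of the upper-left $2\times 2$ block $M$. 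Nilpotency is therefore equivalent to the two identities $\tau \equiv 0$ and $\delta \equiv 0$.

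Next I would exploit $\tau \equiv 0$: the planar field $(H_1,H_2)$, viewed over $\R[z]$, is divergence free, so the polynomial Poincar\'e lemma (term-by-term integration, with $z$ a parameter) produces $G \in \R[z][x,y]$ with $H_1 = -\partial_y G$ and $H_2 = \partial_x G$. Substituting this into $\delta \equiv 0$ gives $G_{xx}G_{yy} - G_{xy}^2 \equiv 0$, i.e. the Hessian of $G$ in the variables $x,y$ is everywhere degenerate. At this point the problem is purely the classification of planar polynomials with identically vanishing Hessian determinant. I would argue that such a $G$ must be, up to an affine (in $x,y$) summand, a function of a single linear form: writing $M = w\, v^{\top}$ for the rank-one nilpotent Jacobian shows $\nabla_{x,y}H_1 \parallel \nabla_{x,y}H_2$, so $\nabla G$ has one-dimensional image; since the rulings of a flat graph are straight lines, and a non-parallel family of such lines would force a square-root (hence non-polynomial) fibre coordinate, the rulings must be parallel, whence $G = \Phi(a x + b y) + \alpha x + \beta y + \gamma$ with $a,b,\alpha,\beta,\gamma$ rational in $z$. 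Setting $f = \Phi'$, $c = -\beta$ and $d = \alpha$ then reproduces exactly~(\ref{normal}).

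The hard part will be this last step carried out with control of the $z$-dependence. The structure theorem is naturally proved over the field $\R(z)$, so it delivers $a,b$ and the coefficients of $\Phi$ as rational functions of $z$; turning these into genuine polynomials $a,b \in \R[z]$ and $f \in \R[z][t]$ requires clearing denominators along the direction $(a(z):b(z)) \in \mathbb{P}^1(\R(z))$ and checking that the divisions $\Phi'' = G_{xx}/a^2$ leave no poles, using that $G$ itself is polynomial in $z$. It is this parametrised bookkeeping—rather than the one-variable classification, which is elementary—where the real care is needed, and it is precisely the content that can be quoted from \cite[Corollary 1.1]{ChE} or from the two-dimensional case of the Gordan--Noether theory in \cite{vE}.
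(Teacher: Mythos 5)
Your outline is correct, but it is worth noting that the paper does not actually prove Proposition~\ref{prop2.2}: it simply cites \cite[Corollary 1.1]{ChE}, so there is no in-text argument to compare against. What you have written is, in effect, a reconstruction of the proof behind that citation, and the skeleton is right: Proposition~\ref{prop2.1}(3) kills the third component; the vanishing of the last row makes the characteristic polynomial factor as $t(t^2-\tau t+\delta)$, so nilpotency is exactly $\tau\equiv 0$ and $\delta\equiv 0$; the trace condition gives a Hamiltonian $G\in\R[z][x,y]$ by term-by-term integration; and the determinant condition becomes $G_{xx}G_{yy}-G_{xy}^2\equiv 0$, reducing everything to the classification of planar polynomials with identically degenerate Hessian over the coefficient field $\R(z)$ --- which is the two-variable dependence problem that \cite{vE} and \cite{ChE} settle. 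The one paragraph of yours that is genuinely heuristic is the ``rulings of a flat graph'' argument; the rigorous substitute is the rank-one structure of the nilpotent $2\times 2$ block (its rows are proportional over $\R(z)$, not merely pointwise, by the $n=2$ case of the dependence problem), and you are right that this is exactly what must be quoted rather than re-derived. Your final bookkeeping step also goes through cleanly and could be made explicit: after normalizing $f(0)=0$ and taking $a,b\in\R[z]$ coprime, the coefficients $f_k\in\R(z)$ of $f$ satisfy $f_k\,a^{k+1}\in\R[z]$ and $f_k\,b^{k+1}\in\R[z]$ (read off from the $x^k$ and $y^k$ coefficients of $H_2-d$ and $H_1-c$), and coprimality of $a^{k+1}$ and $b^{k+1}$ forces $f_k\in\R[z]$, while $c=H_1(0,0,z)$ and $d=H_2(0,0,z)$ are automatically polynomial. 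So your route is sound; its only cost relative to the paper's is that the paper outsources the entire statement, whereas you outsource only the two-variable classification.
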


\begin{rem} \label{reduc} In the normal form (\ref{normal}) we may assume $ f(0) = 0 $ by modifying the polynomials $ c(z) $ and $ d(z) $ if necessary.
\end{rem}

An interesting question about the vector fields satisfying the hypotheses of the MYC or the DMYC concerns the injectivity.

\begin{prop}{\label{prop2.3}}
Any $ X  \in \mathcal{N}_{ld}(\lambda,3) $ is
injective.
\end{prop}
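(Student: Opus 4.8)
The plan is to reduce to the normal form of Proposition~\ref{prop2.2} and then exploit the cancellation built into it. Since injectivity is preserved under conjugation by a linear isomorphism $T$ (both $T$ and $T^{-1}$ are bijections, so $X$ is injective if and only if $T_* X = T \circ X \circ T^{-1}$ is), it suffices to prove that the normal form is injective. Thus I may assume $X \ig \lambda \, I \su (P,Q,0)$ with $P,Q$ as in (\ref{normal}); note that the relevant case is $\lambda \neq 0$, since for $\lambda = 0$ the third coordinate of $X$ is identically zero and $X$ cannot be injective. Writing $X(x,y,z) \ig (\lambda x \su P,\ \lambda y \su Q,\ \lambda z)$, I would take two points with the same image and peel off the coordinates one at a time.

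The key observation is that the combination $ a(z)\,P \su b(z)\,Q $ annihilates the term involving $f$: a direct computation gives $a(z)P(x,y,z) + b(z)Q(x,y,z) = a(z)c(z) + b(z)d(z)$, a function of $z$ alone. Concretely, suppose $X(x_1,y_1,z_1) = X(x_2,y_2,z_2)$. The third coordinates give $\lambda z_1 = \lambda z_2$, hence $z_1 = z_2 =: z$ (here $\lambda \neq 0$). Fixing $z$, set $\alpha = a(z)$, $\beta = b(z)$ and $w_i = \alpha x_i + \beta y_i$, so that $f$ is evaluated at $w_i$. Forming $\alpha$ times the first--coordinate equation plus $\beta$ times the second and using the cancellation above yields $\lambda w_1 = \lambda w_2$, whence $w_1 = w_2$. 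Consequently $f(w_1) = f(w_2)$, so the $f$--terms in the first and second coordinates of $X$ coincide; the first--coordinate equation then reduces to $\lambda x_1 = \lambda x_2$ and the second to $\lambda y_1 = \lambda y_2$, giving $x_1 = x_2$ and $y_1 = y_2$.

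This outline is essentially the whole proof, and I do not expect a serious obstacle beyond bookkeeping. The one point requiring care is the degenerate case $a(z) = b(z) = 0$, where the linear combination carries no information; but there $w_i \ig 0$ automatically, so $f(w_1) = f(w_2)$ holds trivially and the argument closes in the same way. The essential structural input is the cancellation $a(z)P + b(z)Q \in \R[z]$ coming from the normal form (\ref{normal}), together with $\lambda \neq 0$, which is precisely what makes each of the final steps invertible.
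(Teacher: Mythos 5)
Your proof is correct and follows exactly the route the paper intends: the paper's own proof of Proposition~\ref{prop2.3} is the single line ``results from the normal form (\ref{normal})'', and your computation --- reduction to the normal form, $z_1=z_2$ from the third coordinate, the cancellation $a(z)P+b(z)Q=a(z)c(z)+b(z)d(z)$ forcing $w_1=w_2$, then peeling off $x$ and $y$ --- is precisely the detail that one-liner suppresses. Your remark that the argument (and indeed the statement) requires $\lambda\neq 0$ is a fair observation that the paper leaves implicit.
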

\begin{proof} The Proposition results from the normal form (\ref{normal}).
\end{proof}

\medskip

Consider the following sets. Let $ \mathcal{N}_{CY}(\lambda,n) $ (resp. $ \mathcal{N}_{DY}(\lambda,n) $) be the subset of $ \mathcal{N}(\lambda,n) $ consisting of the
polynomial vector fields $ X $ such that
the origin is a global attractor for  the differentiable system $ \dot{x} = X(x) $ (resp. the discrete dynamical system generated by $ X $).

Recall that a vector field $ F :
\R^n \to \R^n $ is {\em triangular} if it has the form
$$ F(x_1,x_2, \cdots,x_n) \ig (F_1(x_1),F_2(x_1,x_2),\cdots,F_n(x_1,x_2,
\cdots,x_n)) \, . $$
Let $ \mathcal{N}_{CT}(\lambda,n) $ (resp. $ \mathcal{N}_{DT}(\lambda,n) $) be the subset of $ \mathcal{N}(\lambda,n) $ consisting of the
polynomial vector fields $ F $ for which there exists a diffeomorphism \linebreak $ G : \R^n \to \R^n $ such that $ G_* F $ (resp. $ G \circ F \circ G^{-1} $) is triangular. The elements of $ \mathcal{N}_{CT}(\lambda,n) $ (resp. $ \mathcal{N}_{DT}(\lambda,n) $) are called {\em triangularizable}. An element $ F $ in $ \mathcal{N}_{CT}(\lambda,n) $ (resp. $ \mathcal{N}_{DT}(\lambda,n) $) is said to be {\em linearly triangularizable} if there exists
a linear change of coordinates which makes $ F $ triangular. We denote the set consisting of the vector fields in $ \mathcal{N}(\lambda,n) $ which are linearly triangularizable  by
$ \mathcal{N}_{LT}(\lambda,n) $.
Since the MYC is true for $ C^1-$vector fields in dimension two and the DMYC is  true for polynomial vector fields also in dimension two, and since  both conjectures are true for triangular vector fields in any dimension, we have the following.

\begin{teo} Let $ \lambda \in \R $, and let $ n \geq 2 $ be an integer. \\Then:
\begin{enumerate}
\item[a)] If  $ \lambda < 0 $ (resp. $ \ab{\lambda} < 1 $), then
$ \mathcal{N}(\lambda,2) \ig  \mathcal{N}_{CY}(\lambda,2) $ (resp. $ \mathcal{N}(\lambda,2) \ig  \mathcal{N}_{DY}(\lambda,2) $).
\item[b)] If  $ \, \lambda < 0 $, then
$  \mathcal{N}_{LT}(\lambda,n) \subset \mathcal{N}_{CT}(\lambda,n)  \subset \mathcal{N}_{CY}(\lambda,n) \cap \mathcal{N}_{ld}(\lambda,n) \, . $
\item[c)]If $ \, 0 < \ab{\lambda} < 1 $, then
$  \mathcal{N}_{LT}(\lambda,n) \subset \mathcal{N}_{DT}(\lambda,n)  \subset \mathcal{N}_{DY}(\lambda,n) \cap \mathcal{N}_{ld}(\lambda,n) \, . $
\end{enumerate}
\end{teo}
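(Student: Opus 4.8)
The plan is to derive all three assertions from two facts quoted in the introduction: the conjectures hold in low dimension (MYC for $C^1$ fields and DMYC for polynomial fields when $n=2$), and both conjectures hold for triangular vector fields in every dimension. The organizing observation is that for any $F=\lambda\,I+H\in\mathcal{N}(\lambda,n)$ the matrix $JF(x)=\lambda\,I+JH(x)$ has $\lambda$ as its only eigenvalue at every point, because $JH(x)$ is nilpotent; hence $F$ meets the MYC hypotheses exactly when $\lambda<0$ and the DMYC hypotheses exactly when $\ab{\lambda}<1$. For part a), fix $F\in\mathcal{N}(\lambda,2)$. If $\lambda<0$ then $F$ satisfies the MYC hypotheses, and since the MYC holds for $C^1$ fields in dimension two the origin is a global attractor of $\dot x=F(x)$, so $F\in\mathcal{N}_{CY}(\lambda,2)$; if $\ab{\lambda}<1$ then $F$ satisfies the DMYC hypotheses and, being polynomial, falls under the dimension-two DMYC of \cite{CGM2}, so $F\in\mathcal{N}_{DY}(\lambda,2)$. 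Because $\mathcal{N}_{CY}$ and $\mathcal{N}_{DY}$ are by definition subsets of $\mathcal{N}(\lambda,2)$, the reverse inclusions are automatic, giving the two equalities. (Here I use the normalization $F(0)=0$, so that the unique equilibrium, unique since $JF$ is everywhere invertible, is the origin.)

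For the first inclusion in b) and c), a linear isomorphism is in particular a diffeomorphism and, by (\ref{camco}), carries $\mathcal{N}(\lambda,n)$ into itself with the same pushforward in the continuous and discrete settings; hence every linearly triangularizable field is triangularizable in both senses, i.e. $\mathcal{N}_{LT}(\lambda,n)\subset\mathcal{N}_{CT}(\lambda,n)$ and $\mathcal{N}_{LT}(\lambda,n)\subset\mathcal{N}_{DT}(\lambda,n)$. In fact the whole of b) and c) restricted to $\mathcal{N}_{LT}$ is immediate: if $T_*F$ is triangular with $T$ linear, then $T_*F\in\mathcal{N}(\lambda,n)$ by (\ref{camco}), its Jacobian is triangular with every diagonal entry equal to $\lambda$, so $J(T_*F-\lambda\,I)$ is strictly triangular, one of its rows vanishes identically, and $T_*F\in\mathcal{N}_{ld}(\lambda,n)$; Proposition~\ref{prop2.1}(1) then returns $F\in\mathcal{N}_{ld}(\lambda,n)$, while the triangular case of the MYC (resp. DMYC) together with invariance of global attraction under the linear conjugacy $T$ gives $F\in\mathcal{N}_{CY}(\lambda,n)$ (resp. $\mathcal{N}_{DY}(\lambda,n)$).

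For the second inclusion in its full strength I would treat the two memberships in parallel. Given $F\in\mathcal{N}_{CT}(\lambda,n)$ with $\lambda<0$, choose a diffeomorphism $G$, normalized by a translation so that $G(0)=0$, with $G_*F$ triangular. Granting that $G_*F$ again lies in $\mathcal{N}(\lambda,n)$, the diagonal of $J(G_*F)$ is constantly $\lambda$, so $G_*F=\lambda\,I+\tilde H$ has strictly triangular $J\tilde H$; one row of $J\tilde H$ is zero, the corresponding $\tilde H_i$ is constant, and since $G_*F(0)=0$ that constant is $0$, whence $\{\tilde H_1,\dots,\tilde H_n\}$ is $\R$-linearly dependent and $G_*F\in\mathcal{N}_{ld}(\lambda,n)$. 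Simultaneously, the triangular case of the MYC makes the origin a global attractor for $G_*F$, and global attraction is invariant under the topological conjugacy induced by $G$, so the origin is a global attractor for $F$ and $F\in\mathcal{N}_{CY}(\lambda,n)$. The discrete case c) is verbatim the same with $0<\ab{\lambda}<1$ and the triangular DMYC of \cite{CGM2}.

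The step I expect to be the main obstacle is exactly the clause ``granting that $G_*F$ again lies in $\mathcal{N}(\lambda,n)$'' when the triangularizing $G$ is genuinely nonlinear. Both the eigenvalue normalization and the dependence relation $\sum_i\alpha_i H_i\equiv\mathrm{const}$ are only manifestly preserved under linear changes (Proposition~\ref{prop2.1}(1)); the continuous pushforward by a nonlinear $G$ distorts $JF$ away from the equilibrium by a term in $D^2G\cdot F$, so a priori $G_*F\notin\mathcal{N}(\lambda,n)$ and neither the strict-triangularity argument nor the triangular MYC applies directly to it. To close this gap I would avoid transporting the Jacobian and instead use the coordinate-free description of the class, namely that $F\in\mathcal{N}_{ld}(\lambda,n)$ precisely when the image of $H=F-\lambda\,I$ lies in an affine hyperplane, and argue via the structure theory of polynomial maps with nilpotent Jacobian in \cite{vE} that triangularizability forces this hyperplane condition on $H$ itself; the attractor conclusion would then be obtained from the topological conjugacy alone, which needs no linearization of $G_*F$. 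For the linearly triangularizable subcase this obstacle is absent, so the inclusions involving $\mathcal{N}_{LT}$ are unconditional.
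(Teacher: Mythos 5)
Your proposal follows exactly the same route as the paper: part a) is the two--dimensional MYC (Gutierrez \cite{Gu}) plus the two--dimensional polynomial DMYC of \cite{CGM2}, and parts b), c) rest on the triangular cases of the two conjectures (\cite{MY} and \cite{CGM2}). The paper's own proof consists of nothing more than these four citations, so for a), for the inclusions $\mathcal{N}_{LT}(\lambda,n)\subset\mathcal{N}_{CT}(\lambda,n)$ and $\mathcal{N}_{LT}(\lambda,n)\subset\mathcal{N}_{DT}(\lambda,n)$, and for the whole chain restricted to $\mathcal{N}_{LT}$, your write--up is correct and in fact more detailed than the original: the observation that a triangular member of $\mathcal{N}(\lambda,n)$ has strictly triangular $JH$, hence a constant (zero) component, hence lies in $\mathcal{N}_{ld}(\lambda,n)$, is exactly the right way to obtain the $\mathcal{N}_{ld}$ factor of the intersection, and Proposition~\ref{prop2.1}(1) transports it back through the linear change.

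The obstacle you flag for the second inclusions is genuine: for a nonlinear triangularizing diffeomorphism $G$, the pushforward $G_*F$ picks up the term $D^2G\cdot F$ (and $G\circ F\circ G^{-1}$ fails to be a pointwise similarity in the discrete case), so $G_*F$ need not lie in $\mathcal{N}(\lambda,n)$, the hypotheses of the triangular MYC/DMYC do not transport automatically, and your strict--triangularity argument for $\mathcal{N}_{CT}(\lambda,n)\subset\mathcal{N}_{ld}(\lambda,n)$ no longer applies. You do not close this gap; you only sketch how you would. But neither does the paper: its entire proof of b) and c) is the citation, and it addresses neither the $\mathcal{N}_{ld}$ inclusion nor the transport of hypotheses under nonlinear conjugacy. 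So, relative to the paper, your proof omits nothing that the authors actually supply; you have merely made explicit where the one--line argument is thin. A minor quibble: in a), the parenthetical ``unique since $JF$ is everywhere invertible'' is not a valid inference in general; uniqueness of the equilibrium follows instead from the global--attractor conclusion itself (or, where available, from injectivity as in Proposition~\ref{prop2.3}).
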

\begin{proof} For a proof of assertion a) in the continuous case, see for example C. Gutierrez~\cite{Gu}; in the discrete case, see \cite[Theorem B]{CGM2}. Assertion~b) follows from \cite[Theorem 4]{MY}. Assertion c) follows from \cite[Theorem~A]{CGM2}.
\end{proof}

Using the preceding notation, the main result of \cite[Section 2]{GC} and its corollaries
can be rewritten as follows.
\begin{teo} \label{principal} Let $ \lambda \in \R $, and let $ m \geq 1 $ be
an integer. Assume $ X_k = \lambda I + H_1 + \cdots + H_k  \in
\mathcal{N}_{ld}(\lambda,3)$ for $ 1 \leq k \leq m $, where $ H_i $ is
a homogeneous polynomial of degree $ i $, with $ i = 1,\dots,m $.
Then $ X_k \in  \mathcal{N}_{LT}(\lambda,3)$ and
$$ X_m(x,y,z) \ig \lambda \, (x,y,z) \su (0,a_1 \, x + \cdots a_m \, x^m,
r_1(x,y) + \cdots + r_m(x,y)) \,  $$
where $ r_i(x,y) $ is a
homogeneous polynomial of degree $ i $, with $ 1 \leq i \leq m $,  up to a linear change of coordinates.
\end{teo}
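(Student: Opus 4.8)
The plan is to find a single linear change of coordinates that brings $X_m$ into the stated triangular form; this already yields the full statement, because each $X_k$ is the degree-$\le k$ truncation of $X_m$, so the same change makes $X_k$ triangular, whence $X_k \in \mathcal{N}_{LT}(\lambda,3)$ for every $1\le k\le m$. To begin, I would apply the planar normal form of Proposition~\ref{prop2.2} to $X_m$, obtaining $T\in Gl_3(\R)$ with $T_*X_m = \lambda I + (P,Q,0)$, where $P = -b(z)\,f(a(z)x+b(z)y)+c(z)$ and $Q = a(z)\,f(a(z)x+b(z)y)+d(z)$. Since $T$ is linear it preserves the homogeneous grading, so the linear-dependence relation used to annihilate the third component holds degree by degree; hence $T_*X_k = \lambda I + (P^{(k)},Q^{(k)},0)$ for each $k$, where $P^{(k)},Q^{(k)}$ are the degree-$\le k$ truncations of $P,Q$. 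Because each $X_k\in\mathcal{N}(\lambda,3)$, every such $(P^{(k)},Q^{(k)})$ has nilpotent Jacobian; as the third row vanishes, this is equivalent to the planar conditions $P^{(k)}_x+Q^{(k)}_y=0$ and $P^{(k)}_xQ^{(k)}_y-P^{(k)}_yQ^{(k)}_x=0$ for all $(x,y,z)$, with $z$ playing the role of a parameter.

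The decisive step is to show that the inner linear form $a(z)x+b(z)y$ has constant coefficients, i.e.\ $a,b\in\R$. This is exactly where the hypothesis that \emph{every} truncation $X_k$ lies in $\mathcal{N}_{ld}(\lambda,3)$, not merely $X_m$, is needed: if $a(z)$ or $b(z)$ had positive degree, the substitution $w=a(z)x+b(z)y$ would be a nonlinear change and one could triangularize only nonlinearly. I would argue by matching homogeneous degrees: writing $P=\sum_jP_j$, $Q=\sum_jQ_j$ and feeding this into the two planar identities above, the trace identity $P_x+Q_y=0$ splits degreewise (it is linear in the layers), while the determinant identity couples the layers; imposing it for every truncation successively --- starting from the nilpotent linear part $H_1$, whose constant Jacobian fixes the distinguished direction --- rigidifies the $z$-dependence and forces $a,b$ to be constant. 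Granting this, a linear change in $(x,y)$ sends $w=ax+by$ to a single coordinate and the fixed direction $(-b,a)$ to the complementary one; in these coordinates $f(w)$ becomes a one-variable polynomial, and the field takes the form $\bigl(\gamma(z),\,(a^2+b^2)f(w)+\delta(z),\,0\bigr)$ with $\gamma=ac+bd$ and $\delta=ad-bc$.

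Finally I would reorder the coordinates to exhibit triangularity. Renaming $(z,w,v)$ as $(x,y,z)$, the field $X_m=\lambda I + H$ becomes $\lambda(x,y,z)+\bigl(0,\ \gamma(x),\ (a^2+b^2)f(y)+\delta(x)\bigr)$: the first component is $\lambda x$, the second is $\lambda y$ plus a polynomial $g(x):=\gamma(x)$ in $x$ alone, and the third is $\lambda z$ plus $r(x,y):=(a^2+b^2)f(y)+\delta(x)$, so that the whole field is triangular in the order $x,y,z$. Decomposing $g$ and $r$ into homogeneous parts gives $g(x)=a_1x+\cdots+a_mx^m$ and $r=r_1+\cdots+r_m$ with $\deg r_i=i$, matching the claim (using Remark~\ref{reduc} to normalize $f(0)=0$ and to absorb constants, so that $g$ has no constant term). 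The main obstacle is the determinant (rank-one) condition in the second paragraph: unlike the trace condition it does not split across homogeneous layers, so forcing $a,b$ to be constant --- equivalently, proving that the field is triangularizable \emph{linearly} rather than merely by a polynomial change --- requires carefully propagating the nilpotency of each truncation up through the degrees, and is where the strength of the graded hypothesis is consumed.
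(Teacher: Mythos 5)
Your reduction is sound as far as it goes: you pass to the normal form of Proposition~\ref{prop2.2}, observe correctly that a single dependence relation for $H_1+\cdots+H_m$ restricts to each homogeneous layer (so every truncation is simultaneously brought to the form $\lambda I+(P^{(k)},Q^{(k)},0)$ with nilpotent Jacobian), translate nilpotency of a matrix with vanishing third row into the trace and determinant identities, and check that once $\{a,b\}$ is linearly dependent over $\R$ the coordinates $(z,\,ax+by,\,-bx+ay)$ exhibit the stated triangular form. (Two small remarks there: by Theorem~\ref{litri} you only need $\{a,b\}$ dependent over $\R$, not $a,b$ constant; and $f\in\R[z][t]$ may retain genuine $z$-dependence in its coefficients, so ``$f(w)$ becomes a one-variable polynomial'' is inaccurate, though harmless, since $z$ is the first coordinate in your final ordering.) The problem is that the theorem has essentially no content beyond the one step you do not carry out: proving that the graded hypothesis forces $\{a,b\}$ to be linearly dependent over $\R$, i.e.\ that $X_m$ is \emph{linearly} and not merely polynomially triangularizable. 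You assert that imposing the determinant identity for every truncation ``rigidifies the $z$-dependence and forces $a,b$ to be constant,'' but no computation is given, and your own closing paragraph concedes that the determinant condition does not split across homogeneous layers and that the required degree-by-degree propagation remains to be done. That propagation \emph{is} the proof. The counterexamples of Proposition~\ref{Y2}, which have the form $f(a(z)x+b(z)y)\,(-b,a,0)$ with $a=1$, $b=z$ independent over $\R$ and are not linearly triangularizable, show that for a general element of $\mathcal{N}_{ld}(\lambda,3)$ the dependence of $\{a,b\}$ can fail; so an argument that never actually consumes the hypothesis on the intermediate truncations in a concrete identity cannot close the gap.

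For what it is worth, the paper itself supplies no proof of Theorem~\ref{principal}: it is stated as a reformulation of the main result of Section~2 of \cite{GC}, where the graded analysis you sketch is actually carried out. Your proposal therefore reduces the statement correctly to its hardest part and stops exactly there.
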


\begin{cor} \label{kmasm} Let $ \lambda \in \R $, and let $ k, m $
be integers such that \linebreak $ 1 \leq k < m $. Any polynomial vector field $ X
= \lambda I + H_k + H_m \in \mathcal{N}(\lambda,3) $, with  $ H_k $
and $ H_m $  homogeneous polynomials of degree k and m,
respectively, belongs to $ \mathcal{N}_{LT}(\lambda,3) $.
\end{cor}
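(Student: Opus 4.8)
The goal is Corollary~\ref{kmasm}: any $X = \lambda I + H_k + H_m$ with $H_k, H_m$ homogeneous of degrees $k < m$ lies in $\mathcal{N}_{LT}(\lambda,3)$. The natural strategy is to deduce this as a special case of Theorem~\ref{principal}, which already handles sums of homogeneous pieces $H_1 + \cdots + H_m$ (one of each degree from $1$ up to $m$) and concludes linear triangularizability. The plan is therefore to fit the two-term field $X = \lambda I + H_k + H_m$ into the framework of Theorem~\ref{principal} by taking $H_i \equiv 0$ for every degree $i \notin \{k, m\}$, so that the relevant sum collapses to exactly the two nonzero summands.

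First I would verify that the hypotheses of Theorem~\ref{principal} are met. The theorem requires that each partial sum $X_j = \lambda I + H_1 + \cdots + H_j$ belong to $\mathcal{N}_{ld}(\lambda,3)$ for $1 \le j \le m$. With the vanishing choice above, the only genuinely new content appears at the partial sums reaching degrees $k$ and $m$. I would need to check that $\lambda I + H_k$ and $\lambda I + H_k + H_m$ each sit in $\mathcal{N}_{ld}(\lambda,3)$, i.e. that $H_k$ (equivalently the row set of $JH_k$) and the combined map are linearly dependent over $\R$. By assertion 2) of Proposition~\ref{prop2.1}, membership in $\mathcal{N}_{ld}$ is equivalent to linear dependence of the rows of the relevant Jacobian, which is a condition automatically forced in dimension three for these homogeneous nilpotent pieces; this is the point where I expect to lean on the structural facts already established.

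The main obstacle is precisely this verification that the intermediate fields are linearly dependent rather than merely nilpotent — in other words, that $H_k + H_m \in \mathcal{N}_{ld}(\lambda,3)$ and not in the complementary set $\mathcal{N}_{li}(\lambda,3)$. Since $X$ is only assumed to lie in $\mathcal{N}(\lambda,3)$, I must argue that a nilpotent Jacobian in dimension three built from two homogeneous blocks cannot have linearly independent rows. I would handle this by examining $JH$ pointwise: a nilpotent $3\times 3$ matrix has rank at most $2$, so its three rows are always linearly dependent over $\R$ at each point; combined with the homogeneity that makes the dependence relation uniform in $x$, this places $X$ in $\mathcal{N}_{ld}(\lambda,3)$.

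Once membership is secured, the conclusion is immediate: Theorem~\ref{principal} applies verbatim and yields $X \in \mathcal{N}_{LT}(\lambda,3)$, which is exactly the assertion of the corollary. The residual work is bookkeeping to confirm that filling in zero summands does not disturb the degree-by-degree hypotheses of the theorem, and I expect no computation beyond invoking Proposition~\ref{prop2.1} and the rank bound on nilpotent matrices.
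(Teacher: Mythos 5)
Your overall strategy --- specializing Theorem~\ref{principal} by taking $H_i \equiv 0$ for $i \notin \{k,m\}$ --- is the natural way to read the corollary off the theorem, and you correctly isolate the real obstacle: the corollary only assumes $X \in \mathcal{N}(\lambda,3)$, so one must \emph{prove} that the partial sums $\lambda I + H_k$ and $\lambda I + H_k + H_m$ lie in $\mathcal{N}_{ld}(\lambda,3)$ before Theorem~\ref{principal} applies. (That $JH_k$ and $JH_m$ are separately nilpotent does follow by splitting each coefficient of the characteristic polynomial of $J(H_k+H_m)$ into homogeneous pieces of distinct degrees, though you do not carry this out.) The gap is in how you discharge that obstacle. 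You argue that a nilpotent $3\times 3$ matrix has rank at most $2$, hence its rows are linearly dependent at each point, and that ``homogeneity makes the dependence relation uniform in $x$.'' Pointwise dependence with $x$-varying coefficients is much weaker than linear dependence over $\R$ (a single constant relation $\alpha_1 H_1 + \alpha_2 H_2 + \alpha_3 H_3 \equiv 0$), and the passage from the former to the latter is precisely the content of the Dependence Problem and the Generalized Dependence Problem recalled in Section~3 of the paper. The paper records that the generalized problem has a \emph{negative} answer for $n \geq 3$: the maps of (\ref{li}) have nilpotent Jacobian of rank $2$ on $\R^3$, hence pointwise dependent rows, yet their rows are linearly independent over $\R$. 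So the implication you invoke is false as a general principle, and homogeneity of the individual blocks does not by itself repair it: the left kernel of $JH(x)$ can rotate with $x$ even for homogeneous $H$.

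What is actually required is (i) the nontrivial fact that a single homogeneous map in dimension $3$ with nilpotent Jacobian has linearly dependent components, and (ii) an argument that the dependence relations for $H_k$ and for $H_m$ can be chosen \emph{compatibly}, i.e.\ that one constant vector annihilates both blocks simultaneously, which must be extracted from the vanishing of the mixed (cross-degree) terms in the nilpotency conditions for $J(H_k+H_m)$. Neither step is supplied by the rank bound. The paper itself offers no proof at this point --- the corollary is presented as a restatement of results of \cite{GC} following Theorem~\ref{principal} --- so the burden of (i) and (ii) is exactly what your proposal leaves open.
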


\begin{cor} \label{2mas3} Let $ \lambda \in \R $. Any polynomial vector field
$ X = \lambda I + H_2 + H_3 \in \mathcal{N}(\lambda,3) $, with  $
H_2 $ and $ H_3 $  homogeneous polynomials of degree 2 and 3, respectively, belongs
to $ \mathcal{N}_{LT}(\lambda,3) $.
\end{cor}

\medskip

Our next result establishes conditions under which vector
fields of the form $X = \lambda I + (P, Q, 0) \in
\mathcal{N}_{ld}(\lambda,3)$, with $(P,Q)$ as in  Proposition \ref{prop2.2},
are linearly triangularizable.

\begin{teo} \label{litri} Let $ X = \lambda \, I + H \in \mathcal{N}_{ld}(\lambda,3) $ where
$$ H(x,y,z) \ig  f(a(z) \, x + b(z) \, y) \, (-b(z),a(z),0) \su (c(z),d(z),0)
\,  $$ with $ \lambda \in \R $, $ a,b,c,d \in \R[z] $, $ f \in \R[z][t] $,
and $ X(0) = 0 $. Then
 $ X \in
\mathcal{N}_{LT}(\lambda,3) $ if and only if either $ f $ is constant or $
\{a,b\} $ are linearly dependent over $ \R $.
\end{teo}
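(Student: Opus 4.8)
The plan is to translate linear triangularizability into a condition on the family of Jacobian matrices $\{JH(x,y,z)\}$. Because a linear change $T$ carries $X=\lambda I+H$ to $\lambda I+THT^{-1}$ (as in (\ref{camco})) and the summand $\lambda I$ is already triangular, $X\in\mathcal{N}_{LT}(\lambda,3)$ if and only if some $T\in Gl_3(\R)$ makes $T\,JH(x,y,z)\,T^{-1}$ lower triangular for every $(x,y,z)$; equivalently, the matrices $JH(x,y,z)$ share a common invariant complete flag $0\subset U_1\subset U_2\subset\R^3$, that is, $JH(x,y,z)\,U_i\subseteq U_i$ for all arguments and $i=1,2$. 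I would first record this equivalence and then compute, writing $w=a(z)x+b(z)y$ and letting $f_t,f_z$ denote the partial derivatives of $f$ in its $t$- and $z$-slots,
\[
JH=\begin{pmatrix}-ab\,f_t & -b^2 f_t & \partial_z H_1\\ a^2 f_t & ab\,f_t & \partial_z H_2\\ 0 & 0 & 0\end{pmatrix}.
\]
Two features organize everything: the third row vanishes, so $\mathrm{span}\{e_1,e_2\}$ is invariant under every $JH$; and the upper-left block equals $f_t\,(-b,a)^{\mathsf T}(a,b)$.

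For the ``if'' direction I would simply exhibit an invariant flag in each case. If $f$ is constant (independent of $t$), then $f_t\equiv0$, the first two columns vanish, $e_1,e_2$ lie in the common kernel, and $\R e_1\subset\mathrm{span}\{e_1,e_2\}\subset\R^3$ works. If $\{a,b\}$ is linearly dependent, choose $(\alpha,\beta)\neq(0,0)$ with $\alpha a+\beta b\equiv0$; then $(-b,a)^{\mathsf T}(a,b)(\alpha,\beta)^{\mathsf T}=(a\alpha+b\beta)(-b,a)^{\mathsf T}=0$, so $(\alpha,\beta,0)$ is in the common kernel and $\R(\alpha,\beta,0)\subset\mathrm{span}\{e_1,e_2\}\subset\R^3$ works. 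In both cases the flag yields $X\in\mathcal{N}_{LT}(\lambda,3)$.

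The substance is the ``only if'' direction, for which I would prove the contrapositive: assuming $f$ non-constant in $t$ and $\{a,b\}$ linearly independent, I would show that $\{JH\}$ has no common invariant line, hence no invariant flag. Since the third row is zero, any invariant line $\R v$ with $JH\,v=\mu\,v$ forces $\mu\,v_3=0$; so either $v_3=0$, or $v_3\neq0$ and $v$ lies in the common kernel. The case $v_3=0$ is quick: there $JH\,v=f_t\,(av_1+bv_2)\,(-b,a,0)^{\mathsf T}$, and invariance at any point where this is nonzero would force $(-b,a)\parallel(v_1,v_2)$, i.e. $av_1+bv_2=0$, which makes the vector zero -- a contradiction; hence $f_t\,(av_1+bv_2)\equiv0$, so $av_1+bv_2\equiv0$, and independence of $\{a,b\}$ gives $v=0$.

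The remaining case -- a common kernel vector with $v_3\neq0$, normalized to $v=(v_1,v_2,1)$ -- is where the main obstacle lies, and it is handled by a Wronskian cancellation. Reading off $JH\,v=0$ gives $\partial_z H_1=b f_t(av_1+bv_2)$ and $\partial_z H_2=-a f_t(av_1+bv_2)$, while differentiating the explicit formulas yields $\partial_z H_1=-b'f-bf_z-bf_t(a'x+b'y)+c'$ and $\partial_z H_2=a'f+af_z+af_t(a'x+b'y)+d'$. Forming $a\cdot(\text{first})+b\cdot(\text{second})$ cancels all the $f_z$- and $f_t$-terms and leaves the identity
\[
(a'b-ab')\,f(z,w)=-(a\,c'+b\,d'),
\]
whose right-hand side depends on $z$ alone. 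Since $\{a,b\}$ is linearly independent, the Wronskian $ab'-a'b\not\equiv0$, so on a cofinite set of $z$-values $f(z,w)$ is independent of $w$, forcing $f_t\equiv0$ and contradicting the non-constancy of $f$. This rules out the last case, so no common invariant line exists and $X\notin\mathcal{N}_{LT}(\lambda,3)$, completing the contrapositive. I expect the bookkeeping in this final cancellation -- keeping the $x,y$-dependence straight so that exactly the $f$-term survives -- to be the one delicate point.
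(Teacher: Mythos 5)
Your argument is correct, and it reaches the conclusion by a genuinely different route from the paper's. You recast linear triangularizability as the existence of a complete flag invariant under the whole family $\{JH(x,y,z)\}$ and then kill the theorem at the level of the invariant \emph{line}: either the line lies in $\{v_3=0\}$, where the identity $f_t\,(av_1+bv_2)\equiv 0$ plus linear independence of $\{a,b\}$ forces $v=0$, or it is a common kernel vector with $v_3\neq 0$, where the combination $a\,\partial_z H_1+b\,\partial_z H_2$ produces the Wronskian identity $(a'b-ab')\,f=-(ac'+bd')$. The paper instead conjugates by an explicit matrix $M=(m_{ij})$ and reads the obstruction off the components of $M\circ H\circ M^{-1}$: the vanishing of the third component gives $(f(t)-f(0))\,(-m_{31}b+m_{32}a)\equiv 0$, and in the degenerate case $(m_{31},m_{32})=(0,0)$ differentiating the second component with respect to the first new coordinate gives $f'(t)\,(m_{22}a-m_{21}b)^2\equiv 0$. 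So the paper's dichotomy is organized around the invariant \emph{plane} (the third row of $M$) while yours is organized around the invariant line; the computations are parallel but not identical, and your version is more intrinsic, avoids writing down $M^{-1}$, and isolates cleanly why the hypothesis that $a,b,c,d,f$ are \emph{polynomials} matters. Two small points you should make explicit when writing this up: (i) the step ``$\{a,b\}$ linearly independent over $\R$ implies $ab'-a'b\not\equiv 0$'' is the Wronskian criterion, valid for polynomials (if $ab'-a'b\equiv 0$ and $a\not\equiv 0$, then $b/a$ is constant on an interval, hence $b$ is a constant multiple of $a$ identically), and this is exactly where non-polynomial coefficient functions would break the argument; (ii) in the final contradiction you need $(a(z),b(z))\neq(0,0)$ so that $w=a(z)x+b(z)y$ sweeps all of $\R$, but this is automatic on the set where $a'b-ab'\neq 0$, so the gap closes itself.
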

\begin{proof} When $ f $ depends only on $ z $, the result is clear. In what follows, we will assume that
the degree of $ f $ with respect to $ t $ is greater than zero. If $ \{a,b\} $ are linearly
dependent over $ \R $, then there exists $ (\alpha,\beta) \in \R^2 -
\{(0,0)\} $ such that $ \alpha \, a(z) \su \beta \, b(z) \ig 0 $,
for all $ z \in \R $. Assume $ \beta \neq 0 $. Then $ b(z) = \delta
\, a(z) $, with $ \delta = -\frac{\alpha}{\beta} $. Consider the
linear isomorphism $ T(x,y,z) = (z, x + \delta \, y, y) $. Then
$$ T_*(X)(u,v,w) \ig  \lambda \, (u,v,w) \su (0,c(u) + \delta \, d(u),
a(u) \, f(a(u) \, v) + d(u)) \, , $$ which is triangular.

\medskip

Now suppose that there exists a linear isomorphism $ M $ such that
$$ M_*(X)(u,v,w) \ig  \lambda \, (u,v,w) \su (A(v,w), B(w), 0) \, . $$
Assume that $ [M] \ig (m_{ij})_{1 \leq i,j \leq 3} $ is the
matrix of $ M $ with respect to the canonical basis of $ \R^3 $. We
have
$$ m_{31} \, [-f(t) \, b(z) + c(z)] \su m_{32} \, [f(t) \, a(z) + d(z)] \equiv 0 \,  $$
where $ t \ig a(z) \, x + b(z) \, y $. Then
$$ m_{31} \, [-f(0) \, b(z) + c(z)] \su m_{32} \, [f(0) \, a(z) + d(z)] \equiv 0 \,  $$
and, therefore,
$$ (f(t) - f(0)) \, [-m_{31} \, b(z)  \su  m_{32} \, a(z)] \equiv 0 \, . $$
If $ (m_{31}, m_{3,2}) \neq (0,0) $,  the proof is complete. If $
(m_{31}, m_{3,2}) \ig (0,0) $, we may assume that $ m_{33} = 1 $ and
 $ \det[M] = 1 $. Thus the matrix of $ M^{-1} $ with
respect to the canonical basis of $ \R^3 $ is
$$ [M^{-1}] \ig \left(
                  \begin{array}{ccc}
                    m_{22} & -m_{12} & \tilde m_{13} \\
                    -m_{21} & m_{11} & \tilde m_{23} \\
                    0 & 0 & 1 \\
                  \end{array}
                \right) $$
with $ \tilde m_{13} \ig -m_{13} \, m_{22} + m_{12} \, m_{23} $ and
$ \tilde m_{23} \ig m_{13} \, m_{21} - m_{11} \, m_{23} $. Then
$$ t = a(w) \, [m_{22} \, u - m_{12} \, v + \tilde m_{13} \, w] \su
b(w) \, [-m_{21} \, u + m_{11} \, v + \tilde m_{23} \, w] \,  $$
and
$$ B(w) \ig m_{21} \, [-f(t) \, b(w) + c(w)] \su m_{22} \, [f(t) \, a(w) + d(w)] \, . $$
Differentiating the preceding expression with respect to $ u $ we obtain
$$ 0 \ig f'(t) \, [m_{22} \, a(w) - m_{21} \, b(w)]^2  \,  $$
and so $ \{a,b\} $ are linearly dependent over $ \R $, which completes  the
proof.
\end{proof}

The next two results assert  that, in the  linearly dependent case,  the origin is always a global attractor when the degree  of the polynomial $ f(t) $ is one. These results give examples of vector fields in $ \mathcal{N}_{CY}(\lambda,3) $ and in $ \mathcal{N}_{DY}(\lambda,3) $ which are not linearly triangularizable. By Remark~\ref{reduc}, it suffices to consider the case $ f(t) = g(z) \, t $.

\begin{teo} \label{CY1} Let $ X = \lambda \, I + H \in \mathcal{N}_{ld}(\lambda,3) $ where
$$ H(x,y,z) \ig  g(z) \, (a(z) \, x + b(z) \, y) \, (-b(z),a(z),0) \su (c(z),d(z),0)
\,  $$ with $ \lambda < 0 $, $ a,b,c,d,g \in \R[z] $ and $ X(0) = 0
$. Then
 $ X \in
\mathcal{N}_{CY}(\lambda,3) $.
\end{teo}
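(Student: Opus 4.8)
The plan is to exploit the triangular (skew--product) structure of the system. Writing $ X = (X_1,X_2,X_3) $, one reads off from the given form of $ H $ that $ X_3(x,y,z) = \lambda \, z $, so the third coordinate decouples: every solution satisfies $ z(t) = z(0) \, e^{\lambda t} $, which tends to $ 0 $ exponentially because $ \lambda < 0 $. Moreover, imposing $ X(0) = 0 $ forces $ c(0) = d(0) = 0 $. Substituting the now--known function $ z(t) $ into the first two equations, the pair $ w(t) = (x(t),y(t)) $ obeys the linear non--autonomous system
\begin{equation} \label{ltv}
\dot{w} \ig A(t) \, w \su e(t), \qquad A(t) \ig \lambda \, I \su g(z(t)) \, N(z(t)), \quad e(t) \ig (c(z(t)),d(z(t))),
\end{equation}
where
$$ N(z) \ig \left( \begin{array}{cc} -a(z)\,b(z) & -b(z)^2 \\ a(z)^2 & a(z)\,b(z) \end{array} \right), \qquad N(z)^2 \ig 0 \, . $$
In particular $ A(t) $ has both eigenvalues equal to $ \lambda $ for every $ t $, consistent with $ X \in \mathcal{N}(\lambda,3) $.

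Since $ z(t) \to 0 $ exponentially and $ a,b,c,d,g $ are polynomials, the coefficient matrix converges to the constant matrix $ A_\infty \ig \lambda \, I \su g(0) \, N(0) $, whose eigenvalues are both $ \lambda < 0 $, so $ A_\infty $ is Hurwitz. I would then record two exponential decay estimates, each coming from the elementary fact that a polynomial $ p $ with $ p(0) = 0 $ satisfies $ |p(z(t))| \le \textrm{const}\cdot|z(t)| $ for $ t \ge 0 $: first, $ \| A(t) - A_\infty \| \le K_1 \, e^{\lambda t} $ (the entries of $ g(z)N(z) - g(0)N(0) $ vanish at $ z = 0 $); second, since $ c(0)=d(0)=0 $, $ \| e(t) \| \le K_2 \, e^{\lambda t} $. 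In particular $ \int_0^\infty \| A(t) - A_\infty \| \, dt < \infty $.

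With these ingredients the conclusion follows from the standard perturbation theory for linear systems, which I would carry out by a Gronwall estimate. Writing $ B(t) = A(t) - A_\infty $ and applying variation of constants against $ e^{A_\infty t} $ --- using $ \| e^{A_\infty(t-s)} \| \le K \, e^{-\alpha(t-s)} $ for a fixed $ \alpha \in (0,-\lambda) $, the factor $ \alpha < -\lambda $ absorbing the polynomial term from the repeated eigenvalue --- Gronwall's inequality applied to $ \|w(t)\| \, e^{\alpha t} $ yields a uniform transition bound $ \| \Psi(t)\Psi(s)^{-1} \| \le C \, e^{-\alpha(t-s)} $ for the homogeneous flow $ \Psi $ of (\ref{ltv}). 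Feeding the decaying forcing through the variation--of--constants formula gives $ \| w(t) \| \le C \, e^{-\alpha t}\|w(0)\| + C\int_0^t e^{-\alpha(t-s)}\|e(s)\|\,ds \to 0 $, the convolution vanishing because $ \|e(s)\| $ decays exponentially. As the $ (x,y) $--equations are linear in $ (x,y) $ with coefficients continuous in $ t $ (since $ z(t) $ is globally defined), every solution exists for all $ t > 0 $; together with $ z(t)\to 0 $ and $ w(t)\to 0 $ this shows the origin is a global attractor, i.e. $ X \in \mathcal{N}_{CY}(\lambda,3) $.

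The main obstacle is the stability of the time--varying system (\ref{ltv}): the pointwise eigenvalue condition (both eigenvalues equal to $ \lambda < 0 $) is by itself insufficient for asymptotic stability --- this is exactly the Markus--Yamabe phenomenon --- so the argument must use the extra structure that $ A(t) $ converges to the constant Hurwitz matrix $ A_\infty $ through an integrable, indeed exponentially small, perturbation. The one technical point to watch is the repeated eigenvalue of $ A_\infty $, handled by taking the decay exponent $ \alpha $ strictly smaller than $ -\lambda $.
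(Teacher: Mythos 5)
Your reduction is exactly the one the paper uses: the $z$--equation decouples as $z(t)=z_0e^{\lambda t}$, and $(x,y)$ then satisfies a non--autonomous affine planar system whose coefficient matrix is $\lambda I+g(z(t))N(z(t))$ with $N(z)$ nilpotent. Where you diverge is in how the asymptotic stability of that linear system is established. The paper disposes of it in one line: since the origin is a local attractor of the full system (all eigenvalues of the Jacobian at $0$ equal $\lambda<0$), there is a basis of solutions of the planar linear system tending to $0$, and linearity upgrades local attraction to global attraction. Your route is quantitative: you note that $A(t)$ converges exponentially (hence in $L^1$) to the Hurwitz constant matrix $A_\infty=\lambda I+g(0)N(0)$, that the forcing decays exponentially because $X(0)=0$ forces $c(0)=d(0)=0$, and you run a Gronwall estimate against $e^{A_\infty t}$, correctly taking the decay exponent $\alpha<-\lambda$ to absorb the possible Jordan block of $A_\infty$. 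Both arguments are valid. Yours is more self--contained: the paper's ``basis of solutions'' assertion is left to the reader and, strictly speaking, needs a word about the affine forcing term and about the fact that the planar system's coefficients depend on $z_0$ (so the local--attractor information only directly controls the system for small $z_0$); your per--orbit estimate, with constants allowed to depend on the fixed initial condition, sidesteps both points at the cost of a longer write--up. I see no gaps.
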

\begin{proof} Note that $ (x(t),y(t),z(t)) $ is a solution of the differential system $ \dot{x} = X(x) $ if and only if $ z(t) = z_0 \, e^{\lambda t} $ and $ (x(t), y(t)) $ is a solution of the linear system
$$ \left(
     \begin{array}{c}
       \dot{x} \\
       \dot{y} \\
     \end{array}
   \right)  \ig \left(
                  \begin{array}{cc}
                    \lambda - A(t)B(t)G(t) & -B(t)^2 G(t) \\
                    A(t)^2 G(t) &  \lambda + A(t)B(t)G(t)\\
                  \end{array}
                \right) \; \left(
     \begin{array}{c}
       x \\
       y \\
     \end{array}
   \right)  \su \left(\begin{array}{c}
       C(t) \\
       D(t) \\
     \end{array}
   \right)\,  $$
where $ (A,B,C,D,G)(t) \ig (a,b,c,d,g)(z_0 \, e^{\lambda t}) $.
Since the origin is a local attractor, there is a basis of solutions of the linear system consisting of solutions which tend to the origin as $ t $ tends to $ + \infty $. Therefore, the origin is a global attractor.
\end{proof}

\begin{teo} \label{DY1} Let $ F = \lambda \, I + H \in \mathcal{N}_{ld}(\lambda,3) $ where
$$ H(x,y,z) \ig  g(z) \, (a(z) \, x + b(z) \, y) \, (-b(z),a(z),0) \su (c(z),d(z),0)
\,  $$ with $ 0 < \lambda < 1 $, $ a,b,c,d,g \in \R[z] $, and $ X(0) = 0
$. Then
 $ F \in
\mathcal{N}_{DY}(\lambda,3) $.
\end{teo}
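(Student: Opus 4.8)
The plan is to mirror the proof of Theorem~\ref{CY1}, replacing the linear flow by its discrete counterpart. First I would write $F$ out coordinatewise. Since the third component of $H$ vanishes, the $z$-coordinate decouples: $z^{(m+1)} = \lambda\, z^{(m)}$, so $z^{(m)} = \lambda^m z_0$, and because $\ab{\lambda} < 1$ we have $z^{(m)} \to 0$. The remaining pair $w^{(m)} = (x^{(m)}, y^{(m)})^T$ then obeys the affine, non-autonomous linear recurrence
\begin{equation*}
w^{(m+1)} = M(z^{(m)})\, w^{(m)} + e(z^{(m)}), \qquad
M(z) = \begin{pmatrix} \lambda - g(z)a(z)b(z) & -g(z)b(z)^2 \\ g(z)a(z)^2 & \lambda + g(z)a(z)b(z) \end{pmatrix},
\end{equation*}
with $e(z) = (c(z), d(z))^T$. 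A short computation gives $\det(M(z) - \mu I) = (\mu - \lambda)^2$, so for every $z$ both eigenvalues of $M(z)$ equal $\lambda$; in particular the frozen matrix $M(0)$ has spectral radius $\ab{\lambda} < 1$. Moreover $X(0) = 0$ forces $c(0) = d(0) = 0$, i.e. $e(0) = 0$.

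Next I would exploit the contraction of $M(0)$. Fixing $\rho$ with $\ab{\lambda} < \rho < 1$, there is an adapted norm on $\R^2$ with $\norm{M(0)} \le \rho$; since $z^{(m)} \to 0$ and $M, e$ are polynomial, there is an index $m_0$ beyond which $\norm{M(z^{(m)})} \le \rho' < 1$ for some $\rho < \rho' < 1$ and $\norm{e(z^{(m)})} \le L\,\ab{\lambda}^{m}$ decays geometrically. Writing the solution by the discrete variation-of-constants formula
\begin{equation*}
w^{(m)} = \Phi(m,0)\, w^{(0)} + \sum_{j=0}^{m-1} \Phi(m,j+1)\, e(z^{(j)}), \qquad \Phi(m,k) = M(z^{(m-1)})\, M(z^{(m-2)}) \cdots M(z^{(k)}),
\end{equation*}
I would estimate each piece separately: the homogeneous propagator satisfies $\norm{\Phi(m,0)} \le (\rho')^{m - m_0}\norm{\Phi(m_0,0)} \to 0$, while the forcing term is a convolution of the contracting propagator with the geometrically decaying sequence $e(z^{(j)})$ and hence also tends to $0$. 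Therefore $w^{(m)} \to 0$ for every initial datum, the origin is a global attractor, and $F \in \mathcal{N}_{DY}(\lambda,3)$.

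The hard part will be the passage from the \emph{frozen} spectral-radius condition to decay of the genuinely non-autonomous recurrence: the matrices $M(z^{(j)})$ for small $j$ (large $z^{(j)}$) need not be contractions, so one cannot argue termwise and the single-step norm may exceed one. The resolution is that only the tail matters, since after step $m_0$ the product is governed by the uniform factor $\rho' < 1$ while the finite head $\Phi(m_0,0)$ is a fixed bounded operator; the convolution estimate for the forcing term reduces to summing $\sum_j (\rho')^{m-j}\ab{\lambda}^{j}$, which is $O((\rho')^{m})$. This is the discrete analogue of the fact that an asymptotically autonomous linear system whose limit matrix is a contraction inherits asymptotic stability, and it is the step where $\ab{\lambda} < 1$ (rather than merely $\lambda$ real) is used in full.
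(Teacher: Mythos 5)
Your argument is correct and ends up at the same reduction as the paper --- decouple $z^{(m)} = \lambda^m z_0 \to 0$ and study the resulting non-autonomous linear recurrence in the $(x,y)$-plane driven by matrices of the form $\lambda I$ plus a nilpotent --- but the two key technical steps are executed differently. First, the paper eliminates the affine term $(c,d)$ outright by an explicit polynomial conjugation $T(u,v,w)=(u+m(w),v+n(w),w)$, whereas you keep it and absorb it through the discrete variation-of-constants formula together with the observation that $X(0)=0$ forces $e(0)=0$ and hence $\norm{e(z^{(j)})}=O(\ab{\lambda}^{j})$; both work, yours trading the computation of $(m,n)$ for a convolution estimate. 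Second, and more substantively, to upgrade the frozen spectral-radius condition $\rho(M(0))=\ab{\lambda}<1$ to decay of the ordered product, the paper expands the $N$-fold product $B(z)=A(\lambda^{N-1}z)\cdots A(z)$ explicitly, notes its entries at $z=0$ are $\lambda^{N}+O(N\lambda^{N-1})$, chooses $N$ large enough that $\norm{B(0)}<1$ in the sup norm, and telescopes the orbit into blocks of length $N$; you instead pass to a norm adapted to $M(0)$ in which a single factor is already a contraction for $z$ near $0$. These are the two standard routes to the same fact, and the adapted-norm route is arguably cleaner since it avoids the product expansion (which the paper only sketches). The one point to make fully explicit in a final write-up is the one you already flag: the bound $\norm{M(z^{(m)})}\le\rho'<1$ holds only for $m\ge m_0$ with $m_0$ depending on $z_0$, so the finitely many initial factors must be packaged into the fixed bounded operator $\Phi(m_0,0)$ before summing the tail; the paper is equally terse about this head-versus-tail splitting.
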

\begin{proof} Without loss of generality, we may assume that  $ c(z) \equiv  d(z) \equiv 0 $. In fact, the polynomials $ c(z) $ and $  d(z) $ may be eliminated by applying the coordinate change  $ T(u,v,w) = (u + m(w), v + n(w), w) $ where

$$ \left(
     \begin{array}{c}
       m(w) \\
       n(w) \\
     \end{array}
   \right) = \frac{-1}{(1 - \lambda)^2} \, \left[(1 - \lambda) \, I \su g(w) \, M(w)\right]  \left(
                                                           \begin{array}{c}
                                                             c(w) \\
                                                             d(w) \\
                                                           \end{array}
                                                         \right)  $$
with
$$ M(w) \ig \left(
                                                \begin{array}{cc}
                                                  -a(w)b(w) & -b(w)^2  \\
                                                  a(w)^2  &  a(w)b(w)  \\
                                                \end{array}
                                              \right) \, . $$
So we assume
   $$ H(x,y,z) \ig  g(z) \, (a(z) \, x + b(z) \, y) \, (-b(z),a(z),0) \, . $$
Therefore,

$$ F(x,y,z) \ig
(A(z)\left(
                     \begin{array}{c}
                       x \\
                       y \\
                     \end{array}
                   \right), \lambda \, z) \,   $$ where
$$ A(z) \ig \left(
              \begin{array}{cc}
                \lambda - a(z)b(z)g(z) & -b(z)^2 g(z) \\
                a(z)^2 g(z) & \lambda  + a(z)b(z)g(z) \\
              \end{array}
            \right) \, . $$
Thus it  suffices to prove that, for any $ (x,y) \in \R^2 $, we have
$$ \lim_{n \to \infty} A(\lambda^n z)A((\lambda^{n-1} z)\dots A(\lambda z) A(z) \left(
                     \begin{array}{c}
                       x \\
                       y \\
                     \end{array}
                   \right) \ig \left(
                     \begin{array}{c}
                       0 \\
                       0 \\
                     \end{array}
                   \right)  \, . $$

\medskip

Let $ \mathcal{M}_2 $ be the normal vector space of the $ 2 \times 2 $ real matrices $ A = (a_{ij}) $ endowed with the norm $ \norm{A} = 2 \, \max \, \ab{a_{ij}} $. Considering $ \R^2 $ endowed with  the norm $ \norm{(x,y)} = \max\{\ab{x},\ab{y}\} $, we have
$$ \norm{A((x,y))} \leq \norm{A} \, \norm{(x,y)} \quad \textrm{and} \quad \norm{A B} \leq \norm{A} \norm{B} \, . $$
A simple computation yields
$$ A(\lambda^n z)A((\lambda^{n-1} z)\dots A(\lambda z) A(z) = $$ $$ \left(
                                                                \begin{array}{cc}
                                                                  \lambda^{n} - n a_0 b_0 g_o \lambda^{n-1} + r_{11}(z) &  -n b_0^2 g_0 \lambda^{n-1} + r_{12}(z)\\
                                                                   n a_0^2 g_0 \lambda^{n-1} + r_{21}(z) & \lambda^{n} + n a_0 b_0 g_0 \lambda^{n-1} +  r_{22}(z) \\
                                                                \end{array}
                                                              \right) $$

\medskip

\noindent where $ r_{ij}(0) = 0 $ and $ (a_0,b_0,g_0) = (a,b,g)(0) $.

\medskip

\noindent Fix $ N \in \N $ so that $ 2 \, N \lambda^{N-1} \, \max\{a_0^2 g_0, b_0^2 g_0, |a_0 b_0 g_0| \} < 1 $. Let
$ B(z) \ig A(\lambda^{N-1} z)A((\lambda^{N-2} z)\dots A(\lambda z) A(z) $. Consider $ 0 < | z|< z_0 $ such that $  \norm{B(z)} \leq  K < 1 $. Then, for $ n = k N - 1 $, we have
\begin{eqnarray*} \norm{A(\lambda^n z)\dots A(z)\left(
                     \begin{array}{c}
                       x \\
                       y \\
                     \end{array}
                   \right)} & = & \norm{B(\lambda^{(k-1)N}z) \dots B(z)\left(
                     \begin{array}{c}
                       x \\
                       y \\
                     \end{array}
                   \right)} \\
                  & \leq & K^k \, \norm{(x,y)} \to 0 \quad  \textrm{if} \quad k \to \infty \,
                   \end{eqnarray*}
which completes the proof.
\end{proof}

When the degree  of the polynomial $ f(t) $ is greater than one, there are examples of polynomial vector fields in  $ \mathcal{N}_{ld}(\lambda,3) $ having orbits that escape to infinity.

\medskip

\begin{prop} \label{Y2} Let $ F = \lambda \, I + H \in \mathcal{N}_{ld}(\lambda,3) $ where
$$ H(x,y,z) \ig   z^{k-1} \, (x + y \, z)^{k + 1} \, (-z,1,0)
\, . $$
Then:
\begin{enumerate}
\item[a)] If $ \, \lambda < 0 $ and $ k $ odd, then
 $ F \not\in
\mathcal{N}_{CY}(\lambda,3) $.
\item[b)] If $ \, 0 < \ab{\lambda} < 1 $, then $ F \not\in
\mathcal{N}_{DY}(\lambda,3) $.
\end{enumerate}
\end{prop}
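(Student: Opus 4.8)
\emph{Common reduction.} In both systems the third component of $F$ is $\lambda z$, so every solution (resp. orbit) has $z(t)=z_0e^{\lambda t}$ (resp. $z_m=\lambda^m z_0$), and the whole difficulty sits in the $(x,y)$-dynamics through the scalar quantity $u=x+yz$ (the argument of the nonlinearity). Note that $(-z,1,0)$ is orthogonal to $\nabla(x+yz)=(1,z,y)$, so in the continuous case the nonlinear term feeds into $\dot u$ only through the motion of $z$; a direct computation gives the purely linear relation $\dot u=\lambda u+\lambda yz$. This tangency is the structural reason the nonlinearity is ``invisible'' to $u$, and it is what suggests searching for explicit self--similar orbits that are exponential (resp. geometric). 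The plan is to produce one such orbit escaping to infinity, which already shows the origin is not a global attractor.

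\emph{Continuous case (a).} I would substitute the ansatz $x=a\,e^{-\lambda t}$, $y=b\,e^{-2\lambda t}$, $z=z_0e^{\lambda t}$, so that $u=(a+bz_0)e^{-\lambda t}=:\alpha e^{-\lambda t}$, into $\dot x=\lambda x-z^k u^{k+1}$ and $\dot y=\lambda y+z^{k-1}u^{k+1}$. Every term then carries the \emph{same} exponential (this is forced by the rate of the nonlinear term), so the two ODEs collapse to $z_0^k\alpha^{k+1}=2\lambda a$ and $z_0^{k-1}\alpha^{k+1}=-3\lambda b$, which give $a=3\alpha$, $b=-2\alpha/z_0$, and the single scalar equation $\alpha^k=6\lambda/z_0^k$. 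Taking $z_0=1$ this reads $\alpha^k=6\lambda$; since $\lambda<0$ the right--hand side is negative, so a real root $\alpha$ exists \emph{precisely because $k$ is odd}. The explicit solution $(3\alpha e^{-\lambda t},-2\alpha z_0^{-1}e^{-2\lambda t},z_0e^{\lambda t})$ then has $|x(t)|\to\infty$ as $t\to+\infty$, whence $F\notin\mathcal{N}_{CY}(\lambda,3)$.

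\emph{Discrete case (b).} I would run the analogous geometric ansatz $x_m=a\,r^m$, $y_m=b\,s^m$, $z_m=z_0\lambda^m$ with $s=r/\lambda$, so that $u_m=(a+bz_0)r^m=:C\,r^m$. Substituting into the two recursions, the nonlinear term carries the rate $\lambda^{k-1}r^{k+1}$, and matching rates forces $r^k=\lambda^{-k}$, that is $r=1/\lambda$ (any $k$) or $r=-1/\lambda$ (only for $k$ even). As before the recursions reduce to a single condition $(Cz_0)^k=R_{\pm}(\lambda)$, where
\[ R_{+}(\lambda)=\frac{(\lambda^3-1)(\lambda+1)}{\lambda},\qquad R_{-}(\lambda)=\frac{(\lambda^3+1)(\lambda^2+1)}{\lambda(1-\lambda)}. \]
For $k$ odd I take $r=1/\lambda$, and $R_{+}$ being any real number is harmless. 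For $k$ even I check the sign: one finds $R_{+}(\lambda)>0$ on $(-1,0)$ and $R_{-}(\lambda)>0$ on $(0,1)$, so a real $C\neq0$ always exists (use $r=1/\lambda$ when $\lambda<0$ and the sign--alternating $r=-1/\lambda$ when $\lambda>0$). In every case $|r|=1/|\lambda|>1$, so $|x_m|=|a|\,|r|^m\to\infty$ and $F\notin\mathcal{N}_{DY}(\lambda,3)$.

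\emph{Main obstacle.} The computations above are routine verifications once the ansatz is fixed; the content lies in (i) guessing the correct self--similar rate, dictated by rate--matching of the nonlinear term, and (ii) the reality/sign analysis of the final scalar equation. This is exactly where the hypotheses enter: $k$ odd is what makes $\alpha^k=6\lambda/z_0^k$ solvable over $\R$ in the continuous case, while in the discrete case one must switch to the alternating ansatz $r=-1/\lambda$ to keep $(Cz_0)^k$ positive when $k$ is even and $\lambda\in(0,1)$. I expect handling these sign conditions uniformly across the parity of $k$ and the sign of $\lambda$ to be the only delicate point.
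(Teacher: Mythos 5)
Your proof is correct, and it lands on exactly the same escaping orbits as the paper: in the continuous case the solution $\bigl(3\alpha e^{-\lambda t},\,-2\alpha z_0^{-1}e^{-2\lambda t},\,z_0e^{\lambda t}\bigr)$ with $\alpha^k=6\lambda/z_0^k$ is precisely the paper's $\gamma(t)$, and your discrete orbit with $(z_0C)^k=(1+\lambda)(\lambda^3-1)/\lambda$ is the paper's $(x_n,y_n,z_n)$. The packaging differs: the paper applies the coordinate change $(u,v,w)=(z(x+yz),\lambda yz^2,z)$, which decouples an autonomous planar system (resp.\ planar map) $(2\lambda u+v,\,3\lambda v+\lambda u^{k+1})$, and then reads the escaping orbit off the second singular (resp.\ fixed) point; you instead rate-match a self-similar exponential/geometric ansatz directly. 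These are computationally equivalent, so I would call this essentially the same proof. One point where your version genuinely adds something: in the discrete case the paper's root $u_0=\sqrt[k]{(1+\lambda)(\lambda^3-1)/\lambda}$ fails to be real when $k$ is even and $0<\lambda<1$ (the radicand is negative there), a case the statement of part b) nominally includes; your alternating ansatz $r=-1/\lambda$ with the sign check on $R_-(\lambda)$ closes exactly that gap, so keep that case analysis in your write-up.
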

\begin{proof} a) By applying the coordinate change
\begin{equation} \label{camco}
(u,v,w) = T(x,y,z) = (z(x + yz), \lambda y z^2, z) \,
\end{equation}
outside of $ z = 0 $, we obtain
 \begin{equation} \label{system1}
 T_*(F)(u,v,w) \ig (2 \lambda \, u + v, 3 \lambda \, v + \lambda \, u^{k+1}, \lambda \, w) \, .
 \end{equation}
 The planar system
 \begin{equation} \label{system2}
 (\dot{u}, \dot{v}) \ig (2 \lambda \, u + v, 3 \lambda \, v + \lambda \, u^{k+1}) \,
 \end{equation}
has two singular points, namely the origin and the point $ u_0(1,-2\lambda) $ where $ u_0 =
\sqrt[k]{6 \lambda} \,$. For $ z_0 \neq 0 $,  the latter singular point generates the following orbit of the original system
$$ \gamma(t) = (\frac{3 \, \sqrt[k]{6 \lambda}}{z_0} \, e^{-\lambda t}, -\frac{2 \, \sqrt[k]{6 \lambda}}{z_0^2} \, e^{-2 \lambda t}, z_0 \, e^{\lambda t}) $$
which escapes to infinity as $ t $ tends to $ +\infty $.

\noindent b) Again, by applying the coordinate change (\ref{camco})
outside of $ z = 0 $, we obtain
 \begin{equation} \label{system2}
 T \circ F \circ T^{-1}(u,v,w) \ig \lambda \, (\lambda \, u + (\lambda - 1)(v + u^{k + 1}), \lambda^2 \, (v + u^{k+1}), w) \, .
 \end{equation}
 The planar map
 $$ (u,v) \to \lambda \, (\lambda \, u + (\lambda - 1)(v + u^{k + 1}), \lambda^2 \, (v + u^{k+1})) $$
has two fixed points, namely the origin and the point $ u_0(1, -\lambda^2(1 + \lambda)) $ where $ u_0 = \sqrt[k]{\frac{(1 + \lambda)(\lambda^3 - 1)}{\lambda}} \, \cdot $ For $ z_0 \neq 0 $,  the latter fixed point generates the orbit of the original map
 $$ (x_n,y_n,z_n) \ig \left(\frac{(1 + \lambda + \lambda^2)u_0}{\lambda^n z_0} ,-\frac{(1 + \lambda)u_0}{\lambda^{2n-1} z_0^2} , \lambda^n z_0\right) \, $$
 which escapes to infinity as $ n $ tends to $ +\infty $.
\end{proof}

\begin{rem}
 Assertion a) of Proposition~\ref{Y2} is also true if we
consider $ n \geq 2 $, $ a(z) = 1 $, $ b(z) = z $, and $ f(t) =
z^{n-2} \, (A_1(z) \, t + \dots + A_n(z) \, t^n) $, with $ n $
either even and $ A_n(0) \neq 0 $ or $ n $ odd and $ A_n(0) < 0 $.
\end{rem}

\medskip

\noindent Thus we are led to posing the following.

\medskip

\noindent {\bf Question 1.} Do there exist vector fields in $ \mathcal{N}_{ld}(\lambda,3) $, with the degree of $ f(t) $ greater than one, for which either the MY Conjecture, or the DMY Conjecture, or both, are true?

\medskip

Our next result shows that, for a vector field $ X \in  \mathcal{N}_{ld}(\lambda,3) $,
in order for the origin not to be a global attractor the vector field must have at least one orbit which escapes to infinity.

\begin{teo}\label{period} Let $ \lambda \in \R $, and let $ F = \lambda \, I + H \in \mathcal{N}_{ld}(\lambda,3) $.
If $ \lambda < 0 $ (resp. $ \ab{\lambda} < 1 $) and $ F \not\in
\mathcal{N}_{CY}(\lambda,3) $ (resp. $ F \not\in
\mathcal{N}_{DY}(\lambda,3) $, then the differential system $
\dot{x} = F(x) $ (resp.  discrete dynamical system generated by $ F
$) has orbits which escape to infinity.
\end{teo}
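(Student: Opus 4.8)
The plan is to reduce to the situation where the third coordinate decouples and then to transfer the two--dimensional global stability results to the invariant plane $\{z=0\}$. First I would invoke Proposition~\ref{prop2.1}(3): after a linear change of coordinates $T$ (which, by Proposition~\ref{prop2.1}(1) and the invariance of both dynamical systems under linear changes of coordinates noted at the start of this section, preserves the continuous and the discrete dynamics, carries escaping orbits to escaping orbits, and preserves the property of the origin being a global attractor) we may assume $F = \lambda\,I + (H_1,H_2,0)$. Then the third component of the field is $\lambda z$, so that $z(t) = z_0\,e^{\lambda t}$ in the continuous case and $z^{(m)} = \lambda^m z_0$ in the discrete case. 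Since $\lambda < 0$ (resp. $\ab{\lambda} < 1$), in either case the $z$--coordinate tends to $0$, and therefore the $\omega$--limit set of every orbit is contained in the invariant plane $\{z = 0\}$.

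Next I would analyze the restricted dynamics on $\{z=0\}$. By Proposition~\ref{prop2.1}(3) (the slice $a = 0$), the restriction $X_0$ is a vector field in $\mathcal{N}(\lambda,2)$, so the eigenvalues of its Jacobian are everywhere equal to $\lambda$. Consequently $X_0$ satisfies the hypotheses of the MYC when $\lambda<0$ and those of the DMYC when $\ab{\lambda}<1$. Invoking the two--dimensional results recalled above --- validity of the MYC for $C^1$ fields in dimension two, and of the DMYC for planar polynomial maps --- the origin is a global attractor for $X_0$ on the plane $\{z=0\}$.

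I would then close the argument by a dichotomy on a fixed orbit through a point $p$. If the orbit escapes to infinity there is nothing to prove, so suppose it does not; then its $\omega$--limit set $\omega(p)$ is nonempty. Choose $q \in \omega(p)$: since $\omega(p)\subset\{z=0\}$ and the forward orbit of $q$ stays in the invariant plane $\{z=0\}$, the previous step gives $\phi(t,q)\to 0$ (resp. $F^m(q)\to 0$); by forward invariance and closedness of $\omega(p)$ this forces the origin to lie in $\omega(p)$. Finally, because the Jacobian of $F$ at the origin has all its eigenvalues equal to $\lambda$, the origin is a hyperbolic sink and hence a local attractor; as the orbit accumulates at the origin it must eventually enter the local basin of attraction and therefore converge to the origin. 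Taking the contrapositive, if the origin is not a global attractor then some orbit fails to converge to it and must escape to infinity.

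The main obstacle is the treatment of orbits that are unbounded but do not tend to infinity in a monotone way, namely those with $\liminf_{t}\norm{\phi(t,p)}<\infty$ while $\limsup_{t}\norm{\phi(t,p)}=\infty$, for which neither boundedness nor a clean escape is available a priori. The device that disposes of this case is precisely the combination used above: global attraction of the origin on the slice $\{z=0\}$ guarantees that any nonempty $\omega$--limit set already contains the origin, while local attraction at the origin upgrades mere accumulation into genuine convergence. I would also be careful to invoke the correct two--dimensional statement in each of the continuous and discrete cases, and to note that an empty $\omega$--limit set is exactly the assertion that the orbit escapes to infinity. It is worth emphasizing that this argument is uniform in the degree of $f(t)$, which is what allows it to cover the high--degree cases left open by Proposition~\ref{Y2}.
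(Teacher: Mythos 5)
Your proposal is correct and follows essentially the same route as the paper's own proof: reduce by a linear change of coordinates to the normal form whose third component is $\lambda z$, observe that every $\omega$--limit set lies in the invariant plane $\{z=0\}$, and invoke the two--dimensional MYC (resp.\ DMYC) on that slice. Your write--up is in fact more complete than the paper's, which compresses both the planar argument and the treatment of unbounded--but--non--escaping orbits into the single assertion that bounded orbits satisfy $\omega(\gamma)=\{0\}$; your use of the local attractivity of the origin to upgrade mere accumulation into convergence supplies exactly the detail left implicit there.
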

\begin{proof} We may assume that $ H = (P,Q,0) $ where
\begin{eqnarray*}
P(x,y,z) & = &  -b(z) \, f(a(z) \, x + b(z) \, y) \su c(z) \quad \textrm{and} \\
Q(x,y,z) & = &   a(z) \, f(a(z) \, x + b(z) \, y) \su d(z)
\end{eqnarray*}
with $ a, b, c, d \in \R[z] $ and $ f \in \R[z][t] $.

Consider the case $ \lambda < 0 $. Let  $ \gamma(t) =
(x(t),y(t),z(t)) $ be a solution of the system $ \dot{x} = F(x) $.
We denote the omega--limit set of $ \gamma $ by $ \omega(\gamma) $.
Since $ z(t) = z(0) \, e^{\lambda \, t} $, we have  $ \omega(\gamma)
\subset W_{\infty} $, where $ W_{\infty} $ is the extended plane $
\{z = 0 \}\cup \{\infty\} $. If the orbit $ \gamma(t) $ is bounded,
then $ \omega(\gamma) = \{0\} ,$ thus obtaining the theorem. The
proof  for the discrete dynamical system generated by $ F $ is
analogous.

\end{proof}

\bigskip

\section{The linearly independent case}
In this section we consider the linearly independent case. We begin
with some algebraic preliminaries extracted from \cite[Chapter
7]{vE} and \cite{ChE}. The study of the Jacobian Conjecture for
polynomial maps of the form $ I + H ,$ where I is the identity map
and H a homogeneous map of degree 3, with JH nilpotent, led various
authors to the following problem. Let $ \kappa $ be a field of
characteristic  zero.

\medskip

\noindent {\bf  Dependence Problem}. Let $d \in \N $, with $ d \geq 1$, and let
$ H = (H_1,\dots,H_n) : \kappa^n \to \kappa^n $ be a homogeneous
polynomial map of degree $d$ such that $ J H $ is nilpotent. Does it
follow that $H_1, \ldots, H_n$ are linearly dependent over $ \kappa
$?

\medskip

 The attempt to solve it by induction led to consider the more general problem:

\medskip

\noindent {\bf Generalized Dependence Problem}. Let $ H =
(H_1,\dots,H_n) : \kappa^n \to \kappa^n $ be a polynomial map such
that $ J H $ is nilpotent. Are the rows of $ J H $ linearly
dependent over $ \kappa $?

\medskip

The answer to this question turned out to be \lq \lq yes" if $n \leq
2$ and \lq \lq no" if $n \geq 3.$ More precisely, the following is
proved by van den Essen in \cite[Theorem 7.1.7]{vE}.

\begin{teo} \begin{enumerate}
\item[i)] If $ J H $ is nilpotent and $ \textrm{rk} \; J H \leq 1 $, then the rows
of $ J H $ are linearly dependent over $ \kappa $ (here rk is the
rank as an element of $ M_n(\kappa(X)) $.
\item[ii)] Let $ r \geq 2 $. Then, for any dimension $ n \geq r + 1 $, there exists
a polynomial map  $ H_{n,r} : \kappa^n \to \kappa^n $ such that $ J H_{n,r} $ is
nilpotent, $ \textrm{rk} \; J H_{n,r} = r $, and the rows of $ J H_{n,r} $ are
linearly independent over $ \kappa $.
\end{enumerate}
\end{teo}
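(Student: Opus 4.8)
The statement splits into a structural implication (i), where low rank must be shown to \emph{force} linear dependence, and an existence statement (ii), where one must \emph{construct} nilpotent Jacobians with independent rows; I would attack them by entirely different methods. For (i), if $\textrm{rk}\,JH=0$ then $JH=0$, $H$ is constant, and the rows are trivially dependent, so assume $\textrm{rk}\,JH=1$. Then all gradient rows lie in a one--dimensional $\kappa(X)$--subspace: fixing $i_0$ with $\nabla H_{i_0}\neq0$ and setting $g:=H_{i_0}$, we have $dH_i=\lambda_i\,dg$ with $\lambda_i\in\kappa(X)$ and $\lambda_{i_0}=1$. Taking exterior derivatives gives $d\lambda_i\wedge dg=0$, so each $\lambda_i$ is functionally dependent on $g$; by char--zero differential algebra (a L\"uroth--type argument) one chooses a polynomial generator so that every $H_i$ equals a univariate polynomial $H_i=S_i(g)$ modulo constants, whence $\nabla H_i=S_i'(g)\,\nabla g$. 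Now I would bring in nilpotency: since $\textrm{rk}\,JH\le1$ we have $(JH)^m=(\textrm{tr}\,JH)^{m-1}JH$, so nilpotency reduces to the single relation $\textrm{tr}\,JH=\sum_i S_i'(g)\,\partial_{x_i}g\equiv0$. A $\kappa$--linear dependence of the rows is exactly a $\kappa$--linear dependence of the univariate polynomials $S_1',\dots,S_n'$ (equivalently, of the $H_i$ modulo constants, cf. \cite[Exercise 7.1.1]{vE} and Proposition~\ref{prop2.1}), and the final step is to extract such a dependence from the trace relation.

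\textbf{The obstacle in (i)} is precisely this extraction: functional dependence alone is too weak, since $g$ and $g^2$ are functionally dependent yet $\kappa$--independent, so the constant relation must come from the vanishing of the trace. This is where van den Essen's argument does its real work, using that $g$ is a genuine polynomial and char $\kappa=0$; I would expect to spend most of the effort ruling out that $\sum_i S_i'(g)\,\partial_{x_i}g\equiv0$ can hold with $S_1',\dots,S_n'$ independent.

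\textbf{Part (ii).} Here I would exhibit an explicit family. For the minimal dimension $n=r+1$, conjugate the $(r+1)\times(r+1)$ nilpotent shift $U$ by a lower--unipotent matrix $M(X)\in GL_{r+1}(\kappa[X])$ and solve the integrability (closed--row) conditions so that $N:=MUM^{-1}$ is a genuine Jacobian; $N$ is then automatically nilpotent and of rank exactly $r$, as $U^{r}\neq0=U^{r+1}$. For $r=2$, $n=3$ this yields, with $H_{3,2}=\big(y-\tfrac12x^2,\ z+xy-\tfrac12x^3,\ \tfrac12x^2y-\tfrac12y^2-\tfrac18x^4\big)$,
\[
JH_{3,2}=\begin{pmatrix}-x & 1 & 0\\[2pt] -\tfrac32x^2+y & x & 1\\[2pt] -\tfrac12x^3+xy & \tfrac12x^2-y & 0\end{pmatrix},
\]
for which $\textrm{tr}\,JH_{3,2}=\textrm{tr}\,(JH_{3,2})^2=\det JH_{3,2}=0$ (characteristic polynomial $t^3$), while $(JH_{3,2})^2\neq0$ (so $\textrm{rk}=2$), and the three rows are $\kappa$--independent because $H_1,H_2,H_3$ have distinct degrees $2,3,4$. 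To reach arbitrary $n>r+1$ I would pad: set $H_{n,r}=(\tilde H_1,\dots,\tilde H_{r+1},\phi_{r+2}(\tilde H),\dots,\phi_n(\tilde H))$, where $\tilde H$ is the base map and $\phi_j(\tilde H)=\tilde H_{r+1}^{\,m_j}$ with the $m_j\ge2$ chosen of pairwise distinct, fresh total degrees. Then the last $n-r-1$ columns of $JH_{n,r}$ vanish, so $JH_{n,r}=\left(\begin{smallmatrix}J\tilde H & 0\\ B & 0\end{smallmatrix}\right)$ is block lower triangular with nilpotent upper block, hence nilpotent; its row rank remains $r$ since each new row lies in the $\kappa(X)$--span of the old ones; and the degree choice keeps all $n$ rows $\kappa$--independent.

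\textbf{The obstacle in (ii)} is neither nilpotency nor rank, both of which are automatic from the shift--conjugation and the block structure, but rather \emph{keeping the nilpotent matrix a genuine Jacobian while preserving $\kappa$--independence}. The integrability conditions in the base case strongly constrain the conjugator $M(X)$ (for $r=2$ they force its nontrivial entry to be linear in $x$), and one must verify that they can be solved with polynomial entries for every $r$. Independence of the rows---the single feature that genuinely distinguishes the $r\ge2$ case from the rank $\le1$ case of part (i)---is then disposed of cleanly by the degree bookkeeping above.
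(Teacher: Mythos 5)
The first thing to say is that the paper does not prove this statement at all: it is quoted verbatim from van den Essen \cite[Theorem 7.1.7]{vE}, and for part (ii) the paper simply records the explicit witness $H_{n,r}$ of \cite[Proposition 7.1.9]{vE}. Measured against that, your part (ii) in the case $r=2$, $n=3$ is correct and, amusingly, is \emph{exactly} the paper's map: substituting $a=0$, $b=\tfrac12$ into the family (\ref{li}) gives $H_1=y-\tfrac12x^2$, $H_2=z+xy-\tfrac12x^3$, $H_3=-\tfrac12(y-\tfrac12x^2)^2$, which is your $H_{3,2}$. Your verification (vanishing of the trace, of $\trace (JH)^2$ and of $\det JH$; $(JH)^2\neq 0$; independence of the rows via the degrees of the $H_i$) is sound, and your padding by powers of a component to reach $n>r+1$ is the same device the cited family uses (there $H_j=f^{j-1}$). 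The gap in (ii) is the general $r$: ``conjugate the nilpotent shift by a unipotent $M(X)$ and solve the integrability conditions'' is a programme, not a proof, and you concede yourself that solvability in polynomials for every $r$ is unverified. Since the theorem asserts existence for \emph{all} $r\geq 2$, part (ii) is only established for $r=2$ in your write-up, whereas the cited family of van den Essen (built from $f=x_2-a(x_1)$ with $\deg a=r$ and the derivatives $a^{(i)}$) covers all $r$ explicitly.

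Part (i) has a genuine, and more serious, gap at precisely the point you flag. The reduction is fine: for a rank-one matrix $A$ one has $A^m=(\trace A)^{m-1}A$, so nilpotency is equivalent to $\trace JH=0$; and granting the rank-one structure theorem $H_i=S_i(g)+c_i$ (itself a nontrivial citation in char $0$), a $\kappa$-linear relation among the rows is equivalent to one among the univariate polynomials $S_1',\dots,S_n'$. But the implication
$$ \sum_{i=1}^n S_i'(g)\,\frac{\partial g}{\partial x_i}\equiv 0 \quad\Longrightarrow\quad S_1',\dots,S_n' \ \textrm{linearly dependent over}\ \kappa $$
is the entire content of the theorem beyond bookkeeping, and you do not argue it — writing $S_i'(t)=\sum_j a_{ij}t^j$ turns the hypothesis into $\sum_j g^j D_jg=0$ for the constant-coefficient derivations $D_j=\sum_i a_{ij}\partial_{x_i}$, and extracting from this that the matrix $(a_{ij})$ has row rank $<n$ requires a genuine argument (leading-form analysis in $g$, or van den Essen's actual proof), not merely the observation that it ``must come from the vanishing of the trace.'' As it stands, part (i) is a correct reduction to an unproved claim, so the attempt does not establish the theorem; it would be acceptable only in the same spirit as the paper's own treatment, namely as a pointer to \cite[Theorem 7.1.7]{vE}.
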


As an example, let $ a \in \R[x_1] ,$ with $ \textrm{deg} \, a = r $
and $ f(x_1,x_2) = x_2 - a(x_1) $. Then $ H_{n,r} = (H_1,\dots,H_n)
,$ where
\begin{eqnarray*}
H_1(x_1,\dots,x_n) & = & f(x_1,x_2) \, , \\
H_i(x_1,\dots,x_n) & = & x_{i+1}  + \frac{(-1)^i}{(i-1)!} \,
a^{(i-1)}(x_1) \,
(f(x_1,x_2))^{i-1} \, , \; \textrm{if} \; 2 \leq i \leq r \, ,\\
H_{r+1}(x_1,\dots,x_n) & = & \frac{(-1)^{r + 1}}{r!} \, a^{(r)}(x_1)
\,
(f(x_1,x_2))^{r} \, , \; \textrm{and} \\
H_j(x_1,\dots,x_n) & = & (f(x_1,x_2))^{j-1} \, , \; \textrm{if} \; r
+ 1 < j \leq n \, ,
\end{eqnarray*}
is a polynomial map satisfying  assertion ii). (See
\cite[Proposition 7.1.9]{vE}). For $ r = 2 $ and $ n \geq 3 $, the
components of $ H_{n,2} $ are
\begin{eqnarray}\label{li}
H_1(x_1,\dots,x_n) & = & x_2 - a \, x_1 - b \, x_1^2 \, , \nonumber\\
H_2(x_1,\dots,x_n) & = & x_3 + (a + 2 \, b \, x_1) \, (x_2 - a \, x_1 - b \, x_1^2) \,
, \; \\
H_3(x_1,\dots,x_n) & = & -b \, (x_2 - a \, x_1 - b \, x_1^2)^2 \, , \nonumber
   \textrm{and for} \; j \geq 4  \\
H_j(x_1,\dots,x_n) & = &  (x_2 - a \, x_1 - b \, x_1^2)^{j-1} \,  \nonumber
 \, ,
\end{eqnarray}
with $ b \neq 0 $.

\bigskip

\begin{teo} \label{liorsc} Let $ F_{n,2} = \lambda \, I + (H_1,\dots,H_n) $,
with  $ H_i $ as in (\ref{li}).
\begin{enumerate}
\item[a)] If $ \lambda < 0 ,$ then the system $ \dot{x} = F_{n,2}(x) $ has orbits that escape to infinity.
\item[b)] If $ -1 < \lambda < 1 ,$ then the discrete dynamical system generated by $ F_{n,2} $ has a periodic orbit of period three.
\end{enumerate}
\end{teo}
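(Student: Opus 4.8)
The plan is to reduce both assertions to the invariant three–dimensional ``core'' of $F_{n,2}$ and then to imitate the strategy of Proposition~\ref{Y2}: exhibit a non–trivial recurrent object for the core (an equilibrium at infinity in the continuous case, a $3$–cycle in the discrete case) and transport it back. Put $f = x_2 - a x_1 - b x_1^2 = H_1$. Because $H_1,H_2,H_3$ involve only $x_1,x_2,x_3$ and $H_j = f^{\,j-1}$ for $j\ge 4$, the triple $(x_1,x_2,x_3)$ spans an invariant subsystem for both the flow and the map, while the remaining coordinates are slaved through $\dot x_j = \lambda x_j + f^{\,j-1}$ (resp. $x_j' = \lambda x_j + f^{\,j-1}$). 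So it suffices to treat $n=3$: an unbounded core orbit makes the full orbit unbounded, and a core $3$–cycle lifts to a full $3$–cycle by giving each slaved coordinate its unique bounded response $x_j^{(m)} = \sum_{k\ge 0}\lambda^{k}(f_{m-1-k})^{\,j-1}$, which is periodic of period $3$ and converges since $|\lambda|<1$.

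For (a), introduce $X = x_1$, $Y = (\tfrac{d}{dt}-\lambda)x_1 = f$ and $Z = (\tfrac{d}{dt}-\lambda)^2 x_1 = x_3 - \lambda b x_1^2$; a direct computation collapses the core to
\begin{equation}\label{coreode}
\dot X = \lambda X + Y,\qquad \dot Y = \lambda Y + Z,\qquad \dot Z = \lambda Z - b(\lambda X + Y)^2,
\end{equation}
equivalently $(\tfrac{d}{dt}-\lambda)^3 X = -b\,\dot X^2$; the system \eqref{coreode} has the origin as its only equilibrium. Since no finite equilibrium is available, I would look at infinity via the weighted blow–up $U = Y/X^2$, $V = Z/X^3$, $W = 1/X$ with the time change $d\tau = X\,dt$. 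On the invariant face $\{W=0\}$ this yields the planar system $U' = V - 2U^2$, $V' = -bU^2 - 3UV$, which has the non–origin equilibrium $(U,V) = (-\tfrac{b}{6},\tfrac{b^2}{18})$; at that point the $W$–equation linearises to $W' \approx \tfrac{b}{6}W$, so the equilibrium carries a one–dimensional invariant manifold transverse to $\{W=0\}$. Orbits on it have $W\to 0$, hence $X\to\infty$ with $X\sim 6/\big(b(t_*-t)\big)$, giving core solutions that escape to infinity; pulling them back to $\R^n$ proves (a). The hypothesis $\lambda<0$ is used only to place $F_{n,2}$ under the MYC.

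For (b), pass to $(p,q,s)=(x_1,f,x_3)$, in which the map becomes
\begin{equation}\label{coremap}
p' = \lambda p + q,\qquad q' = \lambda q + s + b(1-\lambda)(\lambda p^2 + 2pq) - bq^2,\qquad s' = \lambda s - bq^2 .
\end{equation}
Here $a$ has disappeared and $b$ is normalised to $1$ by the scaling $(p,q,s)\mapsto b^{-1}(p,q,s)$, leaving $\lambda$ as the only parameter; moreover a short check shows that for $\lambda\neq 1$ the origin is the \emph{only} fixed point of \eqref{coremap}, so any non–origin point fixed by the third iterate of~\eqref{coremap} automatically has period exactly $3$. Writing a candidate cycle as $(p_i,q_i,s_i)$, $i\in\Z/3$, the first and third equations are linear cyclic recursions giving $q_i = p_{i+1}-\lambda p_i$ and, as $1-\lambda^3\neq 0$,
\[
s_i = \frac{-b\big(\lambda^2 q_i^2 + \lambda q_{i+1}^2 + q_{i+2}^2\big)}{1-\lambda^3};
\]
substituting these into the middle equation reduces the period–$3$ condition to three polynomial equations in $(p_0,p_1,p_2)$. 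At $\lambda=0$, $b=1$ they become $p_i = -(p_{i+1}-p_{i+2})^2$ and are solved by $(p_0,p_1,p_2)=(-1,-1,0)$, giving the explicit cycle $(-1,-1,-1)\to(-1,0,-1)\to(0,-1,0)$ of three distinct points. I would then propagate this solution to all $\lambda\in(-1,1)$, either solving the system outright or continuing in $\lambda$ from the (nondegenerate) solution at $\lambda=0$, and lift the core cycle to $\R^n$ as above.

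The step I expect to be hardest is, in (a), the blow–up analysis: one must verify that the equilibrium at infinity is genuinely attained by an orbit coming from the finite region — i.e. that the transverse invariant manifold enters $\{W>0\}$ — so that \eqref{coreode} really possesses an escaping solution rather than only a formal asymptotic profile. In (b) the analogous difficulty is to solve the cyclic polynomial system uniformly for every $\lambda\in(-1,1)$ and to certify that the three cycle points remain real and distinct across the whole range; once a non–origin solution is secured, the uniqueness of the fixed point guarantees that its period is exactly three.
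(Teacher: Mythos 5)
Your reduction to $n=3$ and the lifting of the $3$--cycle through the slaved coordinates match the paper's argument (the paper writes the lifted coordinates in closed form as $\overline{x_j}=\tfrac{1}{1-\lambda^3}\,g_j(\overline{x_1},\overline{x_2},\overline{x_3})$, which is your geometric series summed). For part a) your core system is, after the rescaling $(u,v,w)=b\,(X,Z,\dot X)$, exactly the normal form $Y(u,v,w)=(w,\lambda v-w^2,2\lambda w+v-\lambda^2 u)$ used in the paper; the difference is the chart at infinity. The paper takes $(s,q,p)=\tfrac1v(1,u,w)$ and builds the explicit polyhedral region $P_A$ on whose boundary the rescaled field points outward, forcing orbits whose limit set lies at infinity. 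You take $W=1/X$ and find the equilibrium $(U,V,W)=(-b/6,\,b^2/18,\,0)$. This route does close, and more easily than you fear: the spectrum there is $\{b,\,b/6,\,b/6\}$, so the equilibrium is a hyperbolic source (for $b>0$) or sink (for $b<0$), and an entire open neighbourhood --- not merely a one--dimensional invariant manifold --- consists of orbits converging to it in the appropriate time direction; your worry about whether the transverse manifold meets $\{W\neq 0\}$ evaporates. You must still sort out the orientation of $d\tau=X\,dt$ on $\{X<0\}$ and fix a sign: $\dot X\approx-\tfrac b6X^2$, so $X\sim 6/(b(t-t_*))$ and the forward escape is to $-\infty$ when $b>0$.

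The genuine gap is in part b). Your plan produces the $3$--cycle only for $\lambda$ in an unspecified neighbourhood of $0$: the explicit solution $(p_0,p_1,p_2)=(-1,-1,0)$ of $p_i=-(p_{i+1}-p_{i+2})^2$ lives at $\lambda=0$, and continuation by the implicit function theorem (granting the nondegeneracy you would still have to verify) yields persistence only for $|\lambda|$ small. Nothing in the proposal rules out the continued branch becoming complex, colliding with the origin, or escaping to infinity before $\lambda$ reaches $\pm 1$; ``solving the system outright'' is precisely the step left undone, and it is the substance of the theorem. The paper closes exactly this hole by exhibiting the period--$3$ point in closed form as a rational function of $\lambda$ with denominator a power of $(1-\lambda)$ --- e.g. $x_0=\frac{(1+\lambda+\lambda^2)(1+4\lambda^2+\lambda^4)}{A\,\beta\,(1-\lambda)^3}$ --- manifestly real, nonzero and defined on all of $(-1,1)$, and moreover checks $p(1)\neq 0$ so that the point can serve as the anchor for the perturbation argument of Theorem~\ref{teo3.9}. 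Until you supply such a global solution, or a topological substitute (a degree argument with a priori bounds on the branch) valid on the whole interval, assertion b) is established only for $\lambda$ near $0$.
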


\begin{proof} For assertion a), it suffices to prove the Theorem when $ n = 3 $.
Set $ X = F_{3,2} .$ Then by applying the coordinate change $
(u,v,w) = \phi(x_1,x_2,x_3) = b \, (x_1, x_3 - \lambda \, b \,
x_1^2, \lambda \, x_1 + x_2 - a \, x_1 - b \, x_1^2) $, we have $
\phi^*(X) = Y $ where
$$ Y(u,v,w) \ig (w,\lambda \, v - w^2, 2 \, \lambda \, w + v - \lambda^2 \, u)
\, . $$ We make the change of coordinates
$$ (s,q,p) \ig \frac{1}{v}(1,u,w) $$
to find those orbits of $Y$ which escape to infinity.  If $ Z $ is
the vector field $ Y $ in the new coordinates, then $ W \ig
(W_1,W_2,W_3) \ig s \, Z $ is defined by
$$ W(s,q,p) \ig (-s (\lambda \, s - p^2),
s (p - \lambda \, q) + q \, p^2, s (\lambda \, p + 1 - \lambda^2 \,
q) + p^3)
 \, . $$
For $ s \neq 0 $, the orbits of $ Z $ and  $ W $ are the same.
Moreover, for $ s > 0 $ (resp. $ s < 0 $), the orbits of $ Z $ and $
W $ have the same (resp. inverse) orientation. Over the plane $ s =
0 $,  the vector field $ W $ is radially repelling outside of a line
of singular points, namely the line $ p = 0 $. For $ s > 0 $, we
have $ W_1 > 0 $ and, therefore, there are no orbits there with $
\omega-$limit set contained at $ s = 0 $. For $ s < 0 $, we must
find orbits of $ W $ with $ \alpha-$limit set contained at $ s = 0
$. Indeed, consider the numbers $$ A = 2 \, \lambda \, , \quad s_0 =
\frac{1}{512 \lambda^3} \, ,  \quad p_0 = - \frac{1}{8 \, \lambda}
\, , \quad q_0 = \frac{11}{16 \, \lambda^2} $$ and the set
$$ P_A = \{(s,q,p) : As - p^2 \leq 0 \, , s_0 \leq s \leq 0 \, , 0 \leq q \leq q_0
\, , 0 \leq p \leq p_0 \} \, . $$ We find:
\begin{enumerate}
\item[1)] Over the set
$ P_A \cap \{(s,q,p) : A \, s - p^2 = 0\} $, the vector field $ W $
points outward from the set $ P_A $.  In fact, if $ (s,q,p) \in  P_A $
and $ A \, s - p^2 = 0 $, then
\begin{eqnarray*} A \, W_1 - 2 \, p \, W_3 & = & -\frac{p^3}{A} \,
[p \, (A + 3\lambda) \su
2 \,(1 - \lambda^2 \, q)] \\
& \geq &  -\frac{p^3}{A} \,[p_0 \, (A + 3\lambda) \su 2 \,(1 -
\lambda^2 \, q_0)] = 0 \, .
\end{eqnarray*}
\item[2)] Over the set $ P_A \cap \{(s,q,p_0) :  s < 0\} $, the vector field
$ W $ points outward from the set $ P_A $. In fact, if $ p = p_0 $, then
\begin{eqnarray*}
W_3 & = & s \, (\lambda \, p_0 + 1 - \lambda^2 \, q) + p_0^3 \\
& \geq & s \, (\lambda \, p_0 + 1) + p_0^3 \ig \frac{7}{8} \, s -
\frac{1}{8^3 \, \lambda^3} \\
& \geq & \frac{7}{8} \, s_0 - \frac{1}{8^3 \, \lambda^3} \ig
\frac{-1}{8^4 \, \lambda^3} \, > \, 0 \, .
\end{eqnarray*}
\item[3)] Over the set $ P_A \cap \{(s,0,p)\} $, the vector field
$ W $ points outward from the set $ P_A $. In fact, if $ q = 0 $, then
$$ W_2 \ig s \, p \, \leq \, 0 \, . $$
\item[4)] Over the set $ P_A \cap \{(s,q_0,p)\} $, the vector field
$ W $ points outward from the set $ P_A $. In fact, if $ q = q_0 $, then
$$ W_2 \ig s \, p  - q_0 \, (\lambda s - p^2) \ig (\lambda \, s - p^2) \,
[\frac{sp}{\lambda \, s - p^2} - q_0] \geq 0 \,  $$ because
$$ \lambda \, s - p^2 \, < \, A \, s - p^2 \leq A \, s_0 - \frac{p_0^2}{4} \ig 0 $$
and
$$ h(s,p)  \ig \frac{sp}{\lambda \, s - p^2} \leq h(s_0,p) \leq
h(s_0, \frac{p_0}{2}) \ig  \frac{1}{16 \lambda^2} \, < \, q_0
 \, . $$
\end{enumerate}
Thus, any orbit $ \gamma(t) $ of $ W $, with $ \gamma(0) $ an
interior point of $ P_A $, has $ \alpha-$limit set contained in the
line $ s = p = 0 $.  Clearly, any (of these)
 orbit corresponds to an orbit of our initial vector field $ X $ that escapes to infinity.
This completes the proof in this case.

Concerning assertion b), assume $n > 3.$ The proof of the case $n =
3$ is a particular case of Theorem~\ref{teo3.9}. Note that
$$ F_{n,2}(x_1,\dots,x_n) = (F_{3,2}(x_1,x_2,x_3), \lambda x_4  + f(x_1,x_2)^3, \dots, \lambda x_n + f(x_1,x_2)^{n-1}) $$
where $ f(x_1,x_2) = x_2 - a \, x_1 - b \, x_1^2 $. We have that the
third iterate of $ F_{n,2} $ is of the form
$$ F_{n,2}^3(x_1,\dots,x_n) = (F_{3,2}^3(x_1,x_2,x_3),\lambda^3 x_4 + g_4(x_1,x_2,x_3),\dots,\lambda^3 x_n + g_n(x_1,x_2,x_3)) \, . $$
The point $ (\overline{x_1},\dots,\overline{x_n}) $, where $
(\overline{x_1},\overline{x_2},\overline{x_3}) $ is a periodic point
of period three of $ F_{3,2} $ and $ \overline{x_j} = \frac{1}{1 -
\lambda^3} \, g_j(\overline{x_1},\overline{x_2},\overline{x_3}) $,
with $ 4 \leq j \leq n $, is a periodic point of period three of $
F_{n,2} ,$ which completes the proof.
\end{proof}

Note that $ H_{3,2} $ has the special form
\begin{equation} \label{implicit}
H_{3,2}(x,y,z) \ig (u(x,y), v(x,y,z), h(u(x,y))) \, .
\end{equation}

By applying a linear change of coordinates to a large class of
polynomial maps $ H = (H_1,H_2,H_3) $ of the form
\begin{equation} \label{implicitgeneral}
H(x,y,z) \ig  (u(x,y,z), v(x,y,z), h(u(x,y,z),v(x,y,z))) \,
\end{equation}
where $ JH $ is nilpotent such that $ H_1, H_2, H_3 $ are linearly
independent, we obtain

 \begin{equation} \label{implicitgeneralreduced}
 G(x,y,z) \ig  (g(t), v_1 \, z -(b_1 + 2v_1 \alpha \, x) \, g(t), \alpha \,  g(t)^2)
 \end{equation}
with $ t = y + b_1 \, x + v_1 \alpha \, x^2 $ and  $ v_1 \alpha \neq
0 $, and $ g \in \R[t] $ with $ g(0) = 0 $ and $ \textrm{deg}_t  \,
g(t) \geq 1 $. More specifically, we have the following.

\begin{teo}\label{cla}
Let $ H(x,y,z) = (u(x,y,z), v(x,y,z), h(u(x,y,z),v(x,y,z))) $.
Assume that $ H(0) = 0, h'(0) = 0 $, and the components of $ H $ are
linearly independent over $ \R $. Let $ A = \frac{\partial
v}{\partial x} \, \frac{\partial u}{\partial z} - \frac{\partial
u}{\partial x} \, \frac{\partial v}{\partial z} $ and $ B =
\frac{\partial v}{\partial y} \, \frac{\partial u}{\partial z} -
\frac{\partial u}{\partial y} \, \frac{\partial v}{\partial z} $. If
$JH$ is nilpotent and $ \textrm{deg}_z(uA) \neq \textrm{deg}_z(vB)
,$ then there exists a $ T \in GL_3(\R) $ such that $ THT^{-1} $ is
of the form (\ref{implicitgeneralreduced}).
\end{teo}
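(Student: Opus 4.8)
The plan is to read off the algebraic constraints imposed by nilpotency and then to use the degree hypothesis to orient the linear change of coordinates. Because $ H_3 = h(u,v) $, the third row of $ JH $ equals $ h_u \, \nabla u + h_v \, \nabla v $, a linear combination of the first two; hence $ \det JH \equiv 0 $ and $ \textrm{rk} \; JH \leq 2 $. By Proposition~\ref{prop2.1}(2) the linear independence of $ H_1, H_2, H_3 $ over $ \R $ is equivalent to the linear independence of the rows of $ JH $, and then part~(i) of van den Essen's theorem (\cite[Theorem 7.1.7]{vE}) (rank $ \leq 1 $ forces dependent rows) gives $ \textrm{rk} \; JH = 2 $; in particular $ \nabla u $ and $ \nabla v $ are independent over $ \R(x,y,z) $, so the fibre field $ w = \nabla u \times \nabla v = (-B, A, u_x v_y - u_y v_x) $ is not identically zero. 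Consequently nilpotency is equivalent to the vanishing of the trace and of the second coefficient of the characteristic polynomial, which a direct computation (with $ A $ and $ B $ as in the statement) turns into
$$ u_x + v_y + h_u \, u_z + h_v \, v_z \ig 0 \qquad \textrm{and} \qquad (u_x v_y - u_y v_x) + h_u \, B - h_v \, A \ig 0 \, . $$

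Next I would exploit the degree hypothesis, whose role is to break the symmetry between $ u $ and $ v $ and to tell us which combination should lose its $ z $-dependence. I would track the leading-in-$ z $ parts of the two identities above; the inequality $ \deg_z(uA) \neq \deg_z(vB) $ is precisely what prevents these leading terms from cancelling, and so (after relabelling so that $ \deg_z(uA) < \deg_z(vB) $) it guarantees the existence of a $ T_1 \in Gl_3(\R) $ after which the first component of $ T_{1*}H $ no longer involves $ z $, i.e. $ u_z \equiv 0 $. Once $ u_z \equiv 0 $ we have $ A = -u_x v_z $ and $ B = -u_y v_z $, and analysing the two identities as identities in $ z $ shows that $ v $ must be affine in $ z $, say $ v = v_1 \, z + \tilde v(x,y) $; moreover $ v_1 \neq 0 $, since otherwise $ A = B = 0 $ and hence $ \deg_z(uA) = \deg_z(vB) = -\infty $, contrary to hypothesis.

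With $ u = u(x,y) $ and $ v $ affine in $ z $, I would substitute back into the second identity, now reading $ (u_x v_y - u_y v_x) + v_1 \, (h_v \, u_x - h_u \, u_y) \ig 0 $ (where $ h_u, h_v $ are evaluated at $ (u, v_1 z + \tilde v) $), and exploit the fact that it holds for every $ z $. Together with the trace identity and the normalization $ h'(0) = 0 $, this pins down $ h $: up to the change of coordinates already made, $ h $ is independent of its second argument and is a pure quadratic, $ h(X,Y) = \alpha \, X^2 $, while $ u $ is forced to be a univariate polynomial $ g $ of the quadratic $ t = y + b_1 \, x + v_1 \alpha \, x^2 $ and $ \tilde v = -(b_1 + 2 v_1 \alpha \, x) \, g(t) $. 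This is exactly the form~(\ref{implicitgeneralreduced}), with $ v_1 \alpha \neq 0 $, $ g(0) = 0 $ and $ \deg_t g \geq 1 $; the step producing $ u = g(t) $ and the reduction of $ h $ to a pure quadratic are where I would lean on the characterization of nilpotent maps of the shape~(\ref{implicitgeneral}) in \cite{ChE}.

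The main obstacle is the middle step. Making rigorous the claim that the first component can be made $ z $-free and $ v $ affine in $ z $ requires careful control of the leading $ z $-coefficients of the two nilpotency identities, and it is exactly here that the hypothesis $ \deg_z(uA) \neq \deg_z(vB) $ is indispensable: it excludes the balanced, degenerate situation in which these top-order terms cancel and the above normalization fails. Carrying out this leading-term bookkeeping, and checking that the nonvanishing conditions $ v_1 \neq 0 $ and $ \alpha \neq 0 $ survive the reduction, is the technical heart of the proof.
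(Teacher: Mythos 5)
The paper does not actually prove Theorem~\ref{cla}: the statement is imported from Chamberland--van den Essen and the entire ``proof'' in the text is the parenthetical citation (See \cite{ChE}). So the honest comparison is between your sketch and the argument in \cite{ChE}. Your opening reductions are correct and worth keeping: the third row of $JH$ is $h_u\nabla u+h_v\nabla v$, so $\det JH\equiv 0$ and $\mathrm{rk}\,JH\le 2$; Proposition~\ref{prop2.1}(2) together with part (i) of van den Essen's rank theorem upgrades linear independence of the components to $\mathrm{rk}\,JH=2$; and nilpotency then reduces exactly to the two identities you display (the trace, and the sum of the principal $2\times 2$ minors, which is indeed $(u_xv_y-u_yv_x)+h_u B-h_v A$ with the $A,B$ of the statement). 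The normal form you aim for is also the right target.

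As a proof, however, the proposal has a genuine gap, and you in effect name it yourself. The entire passage from those two identities to ``after a linear change $u_z\equiv 0$, $v=v_1z+\tilde v(x,y)$ with $v_1\neq 0$, $h(X,Y)=\alpha X^2$, and $u=g(t)$ with $t=y+b_1x+v_1\alpha x^2$'' is asserted rather than derived; this is not a routine verification but the whole content of the Chamberland--van den Essen classification, and the hypothesis $\deg_z(uA)\neq\deg_z(vB)$ enters precisely through the leading-coefficient analysis that you describe only in outline. Moreover, at the one point where you do justify the hardest steps (producing $u=g(t)$ and reducing $h$ to a pure quadratic) you appeal to ``the characterization of nilpotent maps of the shape~(\ref{implicitgeneral}) in \cite{ChE}'' --- that characterization \emph{is} Theorem~\ref{cla}, so the argument is circular there. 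Either the appeal to \cite{ChE} is allowed, in which case the theorem follows immediately and nothing else is needed (this is what the paper does), or it is not, in which case you must actually carry out the bookkeeping of the $z$-leading terms in the two nilpotency identities and show that the inequality of degrees forces the stated normalization, including the survival of $v_1\neq 0$ and $\alpha\neq 0$. As written, the proposal is a correct roadmap but not a proof.
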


\noindent (See \cite{ChE}.)

\begin{rem} \label{rem1} \begin{enumerate}
\item[1)]  Under the condition $ \textrm{deg}_z(uA) \neq \textrm{deg}_z(vB) ,$ by Theorem~\ref{cla}, any vector field
$$ X = \lambda \, I + (u(x,y,z), v(x,y,z), h(u(x,y,z),v(x,y,z))) \in \mathcal{N}_{li}(\lambda,3) $$
 has the  form
 \begin{eqnarray} \label{linf}
 X(x,y,z) & = & \lambda \, (x,y,z) \su (0, v_1 \, z, 0) \su  \\
 & & g(t) \, (1, -(b_1 + 2v_1 \alpha \, x), \alpha \,  g(t))  \nonumber
 \end{eqnarray}
up to a linear change of coordinates, where $ t = y + b_1 \, x + v_1
\alpha \, x^2, \, v_1 \alpha \neq 0 $, and $ g \in \R[t] ,$ with $
g(0) = 0 $ and $ \textrm{deg}_t  \, g(t) \geq 1 .$
\item[2)] When $ n = 3 $, the vector field $\, F_{3,2} \,$ of Theorem~\ref{liorsc} - - up to a  linear change of coordinates - -
 has the form (\ref{linf}) with $ g(t) $ a polynomial of degree one.  Therefore, for this vector field both the MYC and the DMYC are false.
\end{enumerate}
\end{rem}

Consequently, we ask:

\medskip

\noindent {\bf Question 2.} Do there exist vector fields $ X  \in
\mathcal{N}_{li}(\lambda,3) $ of the form (\ref{linf}) for which the
MY and/or the DMY Conjecture is true?

\medskip

\subsection{The continuous case}

In the continuous case, our next result gives a negative answer to
Question 2  when the degree of $ g(t) $ is less than or equal to
two. First note that by applying the coordinate change
\begin{eqnarray*}
(u,v,w) & = & \phi(x,y,z) \\
& = & (\lambda \, x + g(t), t, v_1 \, z + \lambda \, v_1 \, \alpha \, x^2 )
\end{eqnarray*}
where $ t = y + b_1 \, x + v_1 \alpha \, x^2 $, to the vector field (\ref{linf}) we obtain
 \begin{equation}\label{linf2}
\phi_*(X)(u,v,w) \ig \lambda \, (u,v,w) \su (g'(v)(\lambda \, v + w),w, \alpha \, u^2) \, .
\end{equation}

\begin{teo} \label{teo1li}
 Consider a vector field $ X  \in \mathcal{N}_{li}(\lambda,3) $, with $ \lambda < 0 $, of the form (\ref{linf})  where $ g(t) = A_1 \, t + A_2 \, \frac{t^2}{2} $. Then $ X $ has
orbits that escape to infinity.
\end{teo}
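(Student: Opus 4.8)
The plan is to run the whole argument on the conjugate field $\phi_*(X)$ of (\ref{linf2}), because $\phi$ is a polynomial automorphism of $\R^3$: its inverse is read off triangularly from $v=t$, $x=(u-g(v))/\lambda$ (here $\lambda<0$, so $\lambda\neq 0$), $y=v-b_1x-v_1\alpha x^2$ and $z=(w-\lambda v_1\alpha x^2)/v_1$, using $v_1\alpha\neq 0$. Since $\phi$ and $\phi^{-1}$ are proper, an orbit of $\phi_*(X)$ that leaves every compact set corresponds to an orbit of $X$ escaping to infinity. Writing $g(t)=A_1t+A_2t^2/2$, so $g'(v)=A_1+A_2v$, the system (\ref{linf2}) is
\[ \dot u=\lambda u+(A_1+A_2v)(\lambda v+w),\qquad \dot v=\lambda v+w,\qquad \dot w=\lambda w+\alpha u^2, \]
with $\alpha\neq 0$.

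First I would note that, in contrast with Proposition~\ref{Y2}, the origin is the \emph{only} equilibrium of this system (from $\dot v=0$ one gets $\lambda v+w=0$, hence $\dot u=\lambda u=0$ and then $\dot w=\lambda w=0$); so the escaping orbit cannot come from a second rest point and must instead be a finite--time blow--up driven by the quadratic terms $\alpha u^2$ and $A_2v(\lambda v+w)$. To locate it I would read off the quasi--homogeneous weights that make the truncation $(\dot u,\dot v,\dot w)=(A_2vw,\,w,\,\alpha u^2)$ invariant, namely $(4,2,5)$ for $(u,v,w)$, of weighted degree $3$. Substituting $u=a\sigma^{-4/3}$, $v=b\sigma^{-2/3}$, $w=c\sigma^{-5/3}$ with $\sigma=t_*-t$ then produces an explicit self--similar solution of the truncation blowing up at a finite time $t_*$, with $c=2b/3$, $a=A_2b^2/2$ and $b^3=40/(9\alpha A_2^2)$; this trajectory is the model for the escape. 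When $A_2=0$ the same scheme applies with the simpler weights $(2,2,3)$, a degree--one case also covered by Theorem~\ref{liorsc} via Remark~\ref{rem1}.

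To make this rigorous for the full system I would, after a reflection $(u,v,w)\mapsto(\epsilon_1u,\epsilon_2v,\epsilon_2w)$ with $\epsilon_1,\epsilon_2\in\{\pm1\}$ chosen so that $\epsilon_1A_2>0$ and $\epsilon_2\alpha>0$ (this preserves the class and moves the self--similar solution into the positive octant, so I may assume $\alpha>0$, $A_2>0$), build an explicit forward--invariant ``curved cone'' adapted to the rates above, in the spirit of the set $P_A$ of Theorem~\ref{liorsc}:
\[ R=\{(u,v,w):\ w\ge W_0,\ c_1w^{4/5}\le u\le c_2w^{4/5},\ c_3w^{2/5}\le v\le c_4w^{2/5}\}. \]
On each of the six faces one checks that the field points into $R$; for instance on $u=c_1w^{4/5}$ this reduces to $\dot u\ge c_1\tfrac45 w^{-1/5}\dot w$, which holds because $\dot u\approx A_2v(\lambda v+w)\approx A_2c_3w^{7/5}$ while $c_1\tfrac45 w^{-1/5}\dot w\approx\tfrac45\alpha c_1^3w^{7/5}$, so it suffices to take $c_1$ small; the face $u=c_2w^{4/5}$ fixes $c_2$ large and the two $v$--faces fix $c_3$ small and $c_4$ large, the linear $\lambda$--terms being of strictly lower weight and hence absorbed once $W_0$ is large. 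Inside $R$ one then has $\dot w=\lambda w+\alpha u^2\ge\tfrac12\alpha c_1^2w^{8/5}$, and since $8/5>1$ this differential inequality forces $w\to+\infty$ in finite time along any orbit started in the interior of $R$; transporting such an orbit back by $\phi^{-1}$ gives the required escaping orbit of $X$.

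The main obstacle is exactly this invariant--region construction. A Poincar\'e--type compactification (division by a single coordinate) is of no help: after clearing denominators the plane at infinity is a plane of the flow on which the transverse direction has vanishing linear part (e.g.\ dividing by $v$ gives $W_s=-s^2(\lambda+p)=O(s^2)$), so every candidate point at infinity is non--hyperbolic and linearization is inconclusive; this is why I work instead with the explicit weighted cone and the super--linear estimate $\dot w\gtrsim w^{8/5}$. The delicate part is then the simultaneous choice of the constants $W_0,c_1,\dots,c_4$, which must make all six boundary inequalities hold and uniformly dominate the $\lambda$--terms on $R$; a short computation shows the hierarchy $c_1\ll c_3\ll 1\ll c_2\ll c_4$ can be met, since the residual incompatibilities scale like positive powers of $c_1$ and vanish as $c_1\to 0$, which closes the argument.
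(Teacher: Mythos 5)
Your argument is correct in substance but takes a genuinely different route from the paper. The paper also works on the conjugate system (\ref{linf2}), but it then compactifies by dividing by a single coordinate and --- crucially --- follows this with a weighted blow--up $(s,q,p)=(u,v/u^{3},w/u^{5})$; after multiplying the field by $s^{2}$, the point $(0,\tfrac{4\beta}{5A_{2}},\tfrac{8\beta^{2}}{25A_{2}})$ on the plane at infinity becomes a \emph{hyperbolic} singularity with eigenvalues $-\beta/5,\,-2\beta/5,\,-2\beta$, and its basin of attraction (or repulsion, according to the sign of $\beta$) supplies an open set of escaping orbits with almost no estimation. You instead stay in affine coordinates, read off the quasi--homogeneous weights $(4,2,5)$ from the self--similar profile (your relations $c=2b/3$, $a=A_{2}b^{2}/2$, $b^{3}=40/(9\alpha A_{2}^{2})$ are correct), and trap orbits in a forward--invariant weighted cone on which $\dot w\gtrsim w^{8/5}$ forces finite--time blow--up. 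Both proofs work; the paper's buys hyperbolicity at infinity and hence a clean local argument, while yours is more elementary (no compactification, no linearization at a degenerate locus) and makes the escape rate explicit. Your side remark that a single--coordinate compactification ``is of no help'' is too strong: the paper's proof shows it succeeds once supplemented by the weighted blow--up.

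Two details should be repaired. First, the constraints on the constants are circular (the lower $u$--face forces $c_{1}^{3}\lesssim c_{3}$, the lower $v$--face forces $c_{3}\lesssim c_{2}^{-2}$, the upper $u$--face forces $c_{2}^{3}\gtrsim c_{4}$, and the upper $v$--face forces $c_{4}\gtrsim c_{1}^{-2}$); the chain does close, but combining them gives $c_{3}\lesssim c_{1}^{4/3}$, so the feasible hierarchy is $c_{3}\ll c_{1}\ll 1\ll c_{2}\ll c_{4}$ --- for instance $c_{1}=\epsilon$, $c_{3}\sim\epsilon^{3}$, $c_{4}\sim\epsilon^{-2}$, $c_{2}\sim\epsilon^{-3/2}$, whence $c_{2}^{3}\sim\epsilon^{-9/2}\gg c_{4}$ --- and \emph{not} $c_{1}\ll c_{3}$ as you wrote. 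Second, you should state explicitly that the reflection $(u,v,w)\mapsto(\epsilon_{1}u,\epsilon_{2}v,\epsilon_{2}w)$ carries (\ref{linf2}) to the same form with $(A_{1},A_{2},\alpha)$ replaced by $(\epsilon_{1}\epsilon_{2}A_{1},\epsilon_{1}A_{2},\epsilon_{2}\alpha)$, which is what licenses the normalization $A_{2}>0$, $\alpha>0$; with that, and with $W_{0}$ chosen last to absorb the linear terms, all boundary inequalities check out and the argument is complete.
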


\begin{proof} In the case $ A_2 = 0 $, making the linear change of coordinates
$$ (u,v,w) = \phi(x,y,z) = \frac{1}{m} (x, m y, m v_1 z) $$
where $ m = A_1 $, the vector field $ X  - \lambda \, I $ has the
form (\ref{li}). The result now follows from Theorem~\ref{liorsc}.

Next consider the case  $ A_2 \neq 0 $. We may assume

$$ X(x,y,z) = \lambda(x,y,z) \, + \, (g'(y)(\lambda y + z), z, v_1
\alpha x^2) \, . $$

\noindent To find orbits of $ X $ which escape to infinity, we
first make the coordinate change
$$ (u,v,w) \ig \frac{1}{z}(x,y,1) \; . $$
If $ Y $ is the vector field $ X $ in the new coordinates, then $ Z
 \ig w \, Y $ is defined by
$$ Z(u,v,w) \ig (-\beta u^3 + (A_1 \, w + A_2 \, v)(\lambda \, v + 1),
-\beta u^2 v + w , -w(\lambda w + \beta u^2))
 \,  $$
where $ \beta = v_1 \, \alpha $. For $ w \neq 0 $, the vector fields
$ Y $ and $ Z $ have the same orbits. Moreover,  for $ w > 0 $
(resp. $ w < 0 $), the orbits of $ Y $ and $ Z $ have the same
(resp. inverse) orientation. Now we apply the blow--up
$$ (s,q,p) = (u, \frac{v}{u^3},
\frac{w}{u^5}) \, . $$
If $ Y_1 $ is the vector field $ Y $ in the new
coordinates, then $ Y_1  \ig s^2 \, Z_1 $ where
$$ Z_1(s,q,p) \ig A(s,q,p) \, (s,-3q,-5p) \su (0,p - \beta q, -p(\beta + \lambda p  s^3))
 \, $$
and where $ A(s,q,p) \ig -\beta + (A_1 p s^2 + A_2 q)(\lambda q s^3
+ 1) $.

\noindent The singularities of $ Z_1 $ over $ s = 0 $ are
$$ (0,0,0) , \; \; (0, \frac{2
\beta}{3 A_2}, 0),\quad \textrm{and} \quad (0, \frac{4 \beta}{5 A_2}, \frac{8 \beta^2}{25 A_2}) \, . $$
The Jacobian matrix of $ Z_1 $ at $(0, \frac{4 \beta}{5 A_2},
\frac{8 \beta^2}{25 A_2})$ has  eigenvalues
$$ \mu_1 = -\frac{\beta}{5} \, , \; \;
\mu_2 = -\frac{2 \beta}{5} \, , \quad \textrm{and} \quad \mu_3 = -2 \beta \, . $$

\medskip

\noindent In the case $ \beta > 0 $ (resp.  $ \beta < 0 $), this
singularity is an attractor (resp. repeller) for vector field $ Z_1
$. Given  an initial condition $ (s(0), q(0), p(0)) $ sufficiently
close to the singularity, with $ s(0)p(0) > 0 $ (resp. $ s(0)p(0) <
0 $) for  $ \beta > 0 $ (resp. $ \beta < 0 $),  we obtain an orbit
of the original vector field $ X $  that  escapes positively to
infinity.

\end{proof}

\subsection{The discrete case}
In the discrete case, we prove that the answer to Question~2 is
negative for any $ g(t) \in \R[t] $ where $ g(0) = 0 $ and $ deg_t
g(t) \geq 1 $.

\medskip

For $ \ab{\lambda} < 1 $, consider
 \begin{eqnarray} \label{linfd}
 F(x,y,z) & = & \lambda \, (x,y,z) \su (0, v_1 \, z, 0) \su  \\
 & & g(t) \, (1, -(b_1 + 2v_1 \alpha \, x), \alpha \,  g(t))  \nonumber
 \end{eqnarray}
where $ t = y + b_1 \, x + v_1 \alpha \, x^2, v_1 \alpha \neq 0 $,
and $ g(t) \in \R[t] $ with $ g(0) = 0 $ and $ deg_t g(t) \geq 1 $.

\begin{lem} The set of fixed points of $ F $ is reduces to the origin.
\end{lem}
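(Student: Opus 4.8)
The plan is to write out the fixed-point equation $F(x,y,z) = (x,y,z)$ coordinate by coordinate, keeping the abbreviation $t = y + b_1 \, x + v_1 \alpha \, x^2$ as in (\ref{linfd}). Reading off the three components of $F$, the condition $F(x,y,z)=(x,y,z)$ becomes the system
$$ (1-\lambda)\,x = g(t), \qquad (1-\lambda)\,y = v_1\,z - g(t)\,(b_1 + 2v_1\alpha\,x), \qquad (1-\lambda)\,z = \alpha\,g(t)^2. $$
Since $\ab{\lambda} < 1$ we have $1-\lambda \neq 0$, so each of these can be solved for the left-hand variable; this is the only place where the hypothesis on $\lambda$ is used.

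The key step is to eliminate $g(t)$ and $z$ and watch the middle equation collapse. From the first equation, $g(t) = (1-\lambda)\,x$; feeding this into the third gives $z = \alpha\,g(t)^2/(1-\lambda) = \alpha(1-\lambda)\,x^2$. Substituting both expressions into the second equation and dividing through by $1-\lambda$, the right-hand side simplifies to $v_1\alpha\,x^2 - x(b_1 + 2v_1\alpha\,x) = -(b_1\,x + v_1\alpha\,x^2)$, so the equation reduces to $y = -(b_1\,x + v_1\alpha\,x^2)$, that is, $t = 0$. The mildly surprising point — and the heart of the argument — is precisely that the second fixed-point equation, once the other two are used, is equivalent to the vanishing of $t$.

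Once $t = 0$, the standing hypothesis $g(0) = 0$ forces $g(t) = 0$, whereupon the first equation gives $(1-\lambda)\,x = 0$, hence $x = 0$. Then $t = 0$ together with $x = 0$ yields $y = 0$, and the third equation gives $z = \alpha\,g(t)^2/(1-\lambda) = 0$. Therefore the origin is the only fixed point. I do not expect any genuine obstacle here: the computation is entirely elementary, and the whole content is the observation that the three equations are compatible only at $t=0$, which by $g(0)=0$ pins down $(x,y,z)=(0,0,0)$.
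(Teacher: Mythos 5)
Your proof is correct and follows essentially the same route as the paper's: write the three fixed-point equations, use the first and third to eliminate $g(t)$ and $z$ from the second, observe that the second then reduces to $t=0$, and conclude via $g(0)=0$ that the fixed point is the origin. No gaps.
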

\begin{proof} If $ (x_0,y_0,z_0) $ is a fixed point of $ F $ and $ t_0 = y_0 + b_1 x_0 + v_1 \alpha x_0^2 $, then we have
\begin{eqnarray*}
(1 - \lambda) x_0 & = & g(t_0), \\
(1 - \lambda) z_0 & = & \alpha \, g(t_0)^2, \; \textrm{and} \\
 (1 - \lambda) y_0 & = & v_1 \alpha (1 - \lambda) \, x_0^2 \re (b_1 + 2v_1 \alpha \, x_0)(1 - \lambda) x_0 \\
& = & -b_1(1 - \lambda) \, x_0 - v_1 \alpha (1 - \lambda) \, x_0^2 \, .
\end{eqnarray*}
Therefore,
$$ t_0 = 0 \quad \Longrightarrow \quad g(t_0) = 0 \quad \Longrightarrow \quad (x_0,y_0,z_0) \ig (0,0,0) \, .$$
\end{proof}

\noindent {\bf Periodic points of period two.} \, Assume $
(x_0,y_0,z_0) \neq (0,0,0) $ is a periodic point of period two of $
F ,$ and let $ \beta = v_1 \alpha $. Then
\begin{eqnarray}
C_1(x_0,y_0,z_0) & = & (\lambda^2 - 1) \, x_0 + \lambda \, g(t_0) + g(t_1) \ig 0 \, , \label{cond1}\\
C_2(x_0,y_0,z_0) & = & (\lambda^2 - 1) \, z_0 + \lambda \, \alpha \,  g(t_0)^2 + \alpha \, g(t_1)^2 \ig 0 \, , \label{cond2} \\
C_3(x_0,y_0,z_0) & = & (\lambda^2 - 1) \, y_0 + 2 \lambda \, v_1 \, z_0 - \lambda \, (b_1 + 2 \beta \, x_0) \, g(t_0) + \beta \, g(t_0)^2 \nonumber \\
& & - (b_1 + 2 \beta \, (\lambda x_0 + g(t_0))) \, g(t_1) \ig 0 \,
\label{cond3}
\end{eqnarray}
where
\begin{eqnarray*}
t_0 & = & y_0 + b_1 x_0 + \beta \, x_0^2 \quad \textrm{and} \\
t_1 & = & \lambda \, b_1 \, x_0 + \lambda \, y_0 + v_1 \, z_0
- 2 \, \beta \, x_0 \, g(t_0) + \beta \, (\lambda x_0 + g(t_0))^2 \, .
\end{eqnarray*}
\begin{lem} If $ g(t) = t ,$ then the unique periodic point of period two of $ F $ is the origin.
\end{lem}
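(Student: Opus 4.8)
The plan is to exploit the fact that, when $g(t)=t$, the first-coordinate recursion of $F$ becomes \emph{linear} in the auxiliary quantity $t = y + b_1 x + \beta x^2$ (where $\beta = v_1\alpha$), which linearizes almost all of the period-two system. Concretely, I would write a period-two point as a two-point orbit $p_0 \mapsto p_1 = F(p_0) \mapsto p_0$, with $p_i = (x_i,y_i,z_i)$ and $t_i = y_i + b_1 x_i + \beta x_i^2$, and record the six scalar equations expressing $F(p_0)=p_1$ and $F(p_1)=p_0$. These are exactly the conditions $C_1=C_2=C_3=0$ above once one substitutes $p_1=F(p_0)$; the symmetric two-point form is merely more convenient to handle. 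With $g(t)=t$ the first-coordinate equations read $x_1=\lambda x_0+t_0$ and $x_0=\lambda x_1+t_1$, so that
$$ t_0 = x_1 - \lambda x_0, \qquad t_1 = x_0 - \lambda x_1. $$

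Next I would pass to the symmetric and antisymmetric combinations $S=x_0+x_1$, $D=x_1-x_0$ and $\sigma=t_0+t_1$, $\delta=t_0-t_1$. The two relations above immediately give $\sigma=(1-\lambda)S$ and $\delta=(1+\lambda)D$. The third-coordinate equations $z_1=\lambda z_0+\alpha t_0^2$ and $z_0=\lambda z_1+\alpha t_1^2$ form a $2\times 2$ linear system in $(z_0,z_1)$ which, since $\lambda^2\neq 1$ (because $|\lambda|<1$), solves to $z_0=\alpha(\lambda t_0^2+t_1^2)/(1-\lambda^2)$ and $z_1=\alpha(\lambda t_1^2+t_0^2)/(1-\lambda^2)$; in particular $z_0+z_1$ and $z_0-z_1$ become polynomials in $\sigma,\delta$, hence in $S,D$.

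It then remains to impose the two second-coordinate equations. Writing $y_i=t_i-b_1x_i-\beta x_i^2$ and eliminating $z_0,z_1$, these become two polynomial identities in $S,D$. Taking their sum and their difference is where the computation pays off: I have checked that, after substituting $\sigma=(1-\lambda)S$ and $\delta=(1+\lambda)D$, all terms containing $b_1$ cancel, and in the difference the mixed quadratic $\beta$-terms cancel as well, leaving simply
$$ (1+\lambda)^2\,D = 0, $$
so $D=0$ because $\lambda\neq -1$. Feeding $D=0$ into the sum equation collapses it to $(1-\lambda)^3\,S = 2\beta D^2 = 0$, and since $\lambda\neq 1$ this forces $S=0$. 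Hence $x_0=x_1=0$, whence $t_0=t_1=0$, then $z_0=z_1=0$ and $y_0=y_1=0$; that is, $p_0$ is the origin.

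The main obstacle is purely computational: verifying the wholesale cancellation in the difference equation. The substitutions $\sigma=(1-\lambda)S$ and $\delta=(1+\lambda)D$ are what make it happen, so it is worth organizing the second-coordinate computation around $S,D$ from the start, rather than expanding in $x_0,x_1$, so that the $b_1$- and mixed $\beta$-terms are seen to drop out by symmetry before one commits to the final algebra.
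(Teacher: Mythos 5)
Your proof is correct, and it takes a genuinely different route from the paper's. The paper works directly with the composed map: it writes the three conditions $C_1=C_2=C_3=0$ for $F^2(x_0,y_0,z_0)=(x_0,y_0,z_0)$, solves $C_1=0$ for $z_0$, forms the particular combination $\frac{v_1}{1+\lambda}\,C_3-C_2$, which factors as $(1+\lambda)^2(-x_0+b_1x_0+\lambda x_0+\beta x_0^2+y_0)=0$, and then substitutes the resulting value of $y_0$ back to obtain $C_2=-(\lambda-1)^3x_0=0$, forcing $x_0=0$. You instead keep the two-point orbit $(p_0,p_1)$ and exploit its swap symmetry: the linear first-coordinate relations give $t_i$ in terms of $x_0,x_1$, the $z$-equations are a linear $2\times2$ system solvable since $\lambda^2\neq1$, and the sum and difference of the second-coordinate equations yield $(1+\lambda)^2D=0$ and $(1-\lambda)^3S=2\beta D^2$ respectively. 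I have checked the claimed cancellations (the $b_1$-terms drop from both combinations, and the $\beta SD$-terms in the difference combine as $-(1+\lambda)+(1-\lambda)+2\lambda=0$), so the argument closes correctly, using only $\lambda\neq\pm1$. The two proofs extract the same factors $(1+\lambda)^2$ and $(1-\lambda)^3$, but your symmetrization makes the cancellations structural rather than the outcome of a specific, somewhat unmotivated elimination order; the paper's version has the advantage of reusing verbatim the $C_i$ already displayed for general $g$, which is convenient for the MATHEMATICA-assisted computations that follow.
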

\begin{proof} Suppose $ F^2(x_0,y_0,z_0) = (x_0,y_0,z_0) $. In this
case, we have
$$ C_1(x_0,y_0,z_0) \ig  0 \; \Longrightarrow \; z_0 \ig \frac{1}{v_1} \, [(\lambda \beta + 1 - \lambda^2) \, x_0 - 2(\lambda - \beta) \, t_0 - \beta(\lambda \, x_0 + t_0)^2] \, .$$
Replacing this value of $ z_0 $ in the equation $ \frac{v_1}{1 +
\lambda} \, C_3(x_0,y_0,z_0) - C_2(x_0,y_0,z_0) = 0 ,$ we obtain
$$ (1 + \lambda)^2 \, (-x_0 + b_1 \, x_0 + \lambda \, x_0 + \beta \, x_0^2 + y_0) \ig 0 \, .$$
Replacing our $ z_0 $ and the value $ y_0 = (1 - \lambda - b_1) \,
x_0 - \beta \, x_0^2 $ in equations $(16)$ through $(18),$ we obtain
\begin{eqnarray*}
C_1(x_0,y_0,z_0) & = & 0 \, , \\
C_2(x_0,y_0,z_0) & = & -(\lambda - 1)^3 \, x_0 \ig 0 \, ,\\
C_3(x_0,y_0,z_0) & = & \frac{1}{v_1} \, (\lambda - 1)^3 \, (1 + \lambda) \, x_0 \ig 0 \, .
\end{eqnarray*}
This implies that $ (x_0,y_0,z_0) \ig (0,0,0) ,$ which completes the
proof.
\end{proof}

\bigskip

\noindent {\bf Periodic points of period three.} \, Assume $
(x_0,y_0,z_0) \neq (0,0,0) $ is a periodic point of period three of
$ F ,$ and let $ \beta = v_1 \alpha $. Then
\begin{eqnarray*}
D_1(x_0,y_0,z_0) & = & (\lambda^3 - 1) \, x_0 + \lambda^2 \, g(t_0) + \lambda \, g(t_1) + g(t_2) \ig 0 \, , \\
D_2(x_0,y_0,z_0) & = & (\lambda^3 - 1) \, z_0 + \lambda^2 \, \alpha \,  g(t_0)^2 + \lambda \, \alpha \, g(t_1)^2 + \alpha \, g(t_2)^2 \ig 0 \, ,\\
D_3(x_0,y_0,z_0) & = & (\lambda^3 - 1) \, y_0 + 3 \lambda^2 \, v_1 \, z_0 - \lambda^2 \, (b_1 + 2 \beta \, x_0) \, g(t_0) + 2 \lambda \, \beta \, g(t_0)^2 \\
& & - \lambda \, (b_1 + 2 \beta \, (\lambda x_0 + g(t_0))) \, g(t_1) + \beta \, g(t_1)^2 \\
& & -(b + 2\beta(\lambda^2 \, x_0 + \lambda \, g(t_0) + g(t_1))) \,
g(t_2) \ig 0 \,
\end{eqnarray*}
where
\begin{eqnarray*}
t_0 & = & y_0 + b_1 x_0 + \beta \, x_0^2 \quad \textrm{and} \\
t_1 & = & \lambda \, b_1 \, x_0 + \lambda \, y_0 + v_1 \, z_0
- 2 \, \beta \, x_0 \, g(t_0) + \beta \, (\lambda x_0 + g(t_0))^2 \, , \\
t_2 & = & b_1 \, \lambda^2 \, x_0 + \lambda^2 \, y_0 + 2 \lambda \, v_1 \, z_0   - 2\beta \, \lambda \, x_0 \, g(t_0) + \beta \, g(t_0)^2\\
& &   - 2\beta \, (\lambda \, x_0 + g(t_0)) \, g(t_1)
+ \beta \, (\lambda^2 \, x_0  + \lambda \, g(t_0) +  g(t_1))^2 \, .
\end{eqnarray*}

\begin{lem} If \, $ -1 < \lambda < 1 $ and $ \, g(t) = A \, t $, with $ A \neq 0 $, then $ F $
has a periodic point of period three $\, (x_0,y_0,z_0) \neq (0,0,0)
$. Furthermore, the eigenvalues of $ D F^3(x_0,y_0,z_0) $ are all
other than $1.$
\end{lem}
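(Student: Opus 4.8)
Both assertions are invariant under linear changes of coordinates, since a linear conjugacy $F\mapsto LFL^{-1}$ carries periodic orbits to periodic orbits and, by similarity, preserves the eigenvalues of the return map $DF^3$. Hence the plan is first to normalize. By a linear change together with a rescaling of the variables I may fix the constants $A,b_1,v_1,\alpha$, reducing the analysis to a concrete one-parameter (in $\lambda$) quadratic representative of the family (\ref{linfd}); by Remark~\ref{rem1} this representative may be taken to be $F_{3,2}=\lambda I+H_{3,2}$ of Theorem~\ref{liorsc}, with $H_{3,2}$ as in (\ref{li}). Working with such an explicit representative keeps every subsequent polynomial computation bounded in degree.

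Next I would locate the period-three orbit by imposing $F^3(p)=p$. Following the template of the period-two lemma above, where $C_1=0$ was solved for $z_0$ and a combination of $C_2,C_3$ fixed $y_0$, I would eliminate two of the three coordinates from the system $D_1=D_2=D_3=0$, now choosing the elimination order so as to keep degrees manageable (the correction terms entering $t_1,t_2$ make $D_1,D_2,D_3$ of higher degree than in the period-two case, so a careful ordering, or a preliminary polynomial conjugation lowering the degree, is advisable). The outcome is a single univariate equation $x_0\,R(x_0)=0$, whose factor $x_0$ accounts for the origin. Unlike the period-two situation, where the reduced equation was $(\lambda-1)^3x_0=0$ and forced $x_0=0$, here $R$ is nonconstant, and the crux is to show that $R$ has a real root $x_0\neq0$ for every $\lambda\in(-1,1)$; I would establish this either by displaying $R$ in factored form or by a sign argument, for instance comparing the signs of $R$ at $0$ and at $\pm\infty$. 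Once such a nonzero $p$ with $F^3(p)=p$ is found, the first Lemma of this subsection shows that the origin is the only fixed point of $F$, and since $3$ is prime it follows at once that $p$ has exact period three.

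For the nondegeneracy statement I would compute the return map $DF^3(p)=DF(F^2p)\,DF(Fp)\,DF(p)$ by the chain rule, using $DF=\lambda I+JH$ with the explicit nilpotent $JH$ of the normalized map. Because $JH$ is nilpotent, $\det DF\equiv\lambda^3$, hence $\det DF^3(p)=\lambda^9$; this alone does not exclude the eigenvalue $1$, so I would instead evaluate the characteristic polynomial of $DF^3(p)$ at $\mu=1$, that is, show directly that $\det\bigl(DF^3(p)-I\bigr)\neq0$. Substituting the explicit orbit coordinates $p,Fp,F^2p$ obtained in the previous step reduces this to a single inequality in $\lambda$, to be verified on $(-1,1)$.

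The main obstacle is the elimination of the second step: producing the reduced polynomial $R$ in a form from which both the reality of a nonzero root and the nonvanishing of $\det\bigl(DF^3(p)-I\bigr)$ can be read off over the whole interval $-1<\lambda<1$. I expect the final inequalities to be governed by the factors $(\lambda-1)$, $(1+\lambda)$ and the nondegeneracy constant $v_1\alpha\neq0$, but organizing the algebra, ideally through the concrete normalized representative rather than the general parametrized form, is where the real work lies.
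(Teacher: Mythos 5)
Your proposal is a plan rather than a proof: every step that actually decides the two claims is left open. The existence step ends with ``the crux is to show that $R$ has a real root $x_0\neq 0$ for every $\lambda\in(-1,1)$; I would establish this either by displaying $R$ in factored form or by a sign argument,'' and the nondegeneracy step ends with ``a single inequality in $\lambda$, to be verified on $(-1,1)$.'' Neither is carried out, and you yourself flag that ``organizing the algebra \dots is where the real work lies.'' But for this lemma the algebra \emph{is} the proof. The paper settles both points by exhibiting an explicit certificate (computed with MATHEMATICA): the periodic point is given in closed form, rational in $\lambda$ with denominators that are powers of $(1-\lambda)$ times the nonzero constants $A,\beta,v_1$, and with numerator containing the factor $(1+\lambda+\lambda^2)(1+4\lambda^2+\lambda^4)>0$ --- so existence and nonvanishing on all of $(-1,1)$ are immediate, with no sign-change argument needed (and your proposed comparison of signs of $R$ at $0$ and $\pm\infty$ is speculative: nothing in your outline guarantees such a sign change occurs for every $\lambda$ in the interval). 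Likewise the eigenvalue claim is settled by the explicit characteristic polynomial, whose value at $1$ is $3(\lambda-1)^3(1+\lambda+\lambda^2)^3\neq 0$. Without producing either the explicit orbit or some other verifiable reason why the reduced equation has a nonzero real solution for \emph{all} $\lambda\in(-1,1)$ and why $\det(DF^3(p)-I)\neq 0$ there, the lemma is not proved.

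Two smaller points. First, your normalization step (fixing $A,b_1,v_1,\alpha$ by linear conjugacy and identifying the representative with $F_{3,2}$) is plausible --- the paper's formulas are consistent with it, since the characteristic polynomial of $DF^3$ ends up depending on $\lambda$ alone --- but you assert it without checking that the diagonal rescalings really absorb all four parameters while preserving the form (\ref{linfd}); this needs a short verification. Second, your logic for exactness of the period (origin is the unique fixed point, $3$ is prime, hence any nonzero solution of $F^3(p)=p$ has exact period $3$) is correct and matches what the paper implicitly uses.
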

\begin{proof}
Computations were done using MATHEMATICA. These proved that the
point $ (x_0,y_0,z_0) ,$ where
\begin{eqnarray*}
x_0 & = & \frac{(1 + \lambda + \lambda^2) \, (1 + 4 \lambda^2 + \lambda^4)}{A \, \beta \, (1 - \lambda)^3} \; , \\
y_0 & = &  -\frac{1 + \lambda + \lambda^2}{A^2 \, \beta \, (1 - \lambda)^6} \, [\lambda \, (1 + \lambda + \lambda^2) \, (4 + \lambda + 8 \lambda^2 + 11 \lambda^3 + 4 \lambda^4 +
7 \lambda^5 + \lambda^7)  \\
& &  \su  A \, b_1 \, (1 - \lambda)^3 \, (1 + 4 \lambda^2 + \lambda^4)]  ,   \quad    {\textrm{and}}   \\
z_0 & = & \frac{(1 + \lambda + \lambda^2)^3 \, (1 + 3 \lambda^2 + 4 \lambda^3 + 3 \lambda^4 + \lambda^6)}{v_1 \, A^2 \, \beta \, (1 - \lambda)^5}
\end{eqnarray*}
is a periodic point of period three of $ F $. They also proved that
the characteristic polynomial of  $ D F^3(x_0,y_0,z_0) $ is
\begin{eqnarray*}
p(x) & = &  -\lambda^9
- \lambda \, (8 + 44 \, \lambda + 104 \, \lambda^2 + 164 \, \lambda^3 + 164 \, \lambda^4 +
     113 \, \lambda^5 + 44 \, \lambda^6  \\
& &  + 8 \, \lambda^7 - 4 \, \lambda^8) \, x +  (- 4 + 8 \, \lambda +
     44 \, \lambda^2 + 113 \, \lambda^3 + 164 \, \lambda^4 + 164 \, \lambda^5 \\
& & +
     104 \, \lambda^6 + 44 \, \lambda^7 + 8 \, \lambda^8) \, x^2 + x^3 \, ,
\end{eqnarray*}
and that
$$ p(1) = 3 \, (\lambda - 1)^3 \, (1 + \lambda + \lambda^2)^3 \neq 0 \, . $$

\end{proof}

\begin{teo}\label{teo3.9} For $ \ab{\lambda} < 1 $, consider
 \begin{eqnarray*}
 F(x,y,z) & = & \lambda \, (x,y,z) \su (0, v_1 \, z, 0) \su  \\
 & & g(t) \, (1, -(b_1 + 2v_1 \alpha \, x), \alpha \,  g(t))
 \end{eqnarray*}
where $ t = y + b_1 \, x + v_1 \alpha \, x^2,  \, v_1 \alpha \neq 0
$, and $ g(t) \in \R[t] ,$ with $ g(0) = 0 $ and $ g'(0) \neq 0 $.
Then there exists $ (x_0,y_0,z_0) \neq (0,0,0) $ which is a periodic
point of period 3 of $ F $.
\end{teo}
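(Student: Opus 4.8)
The plan is to reduce the statement to the linear case handled in the preceding lemma and then propagate that solution by a continuation/degree argument. First note that, by the lemma asserting that the only fixed point of $F$ is the origin (valid for every $g$ with $g(0)=0$ and $g'(0)\neq 0$), and since $3$ is prime, any nonzero solution of $F^3(p)=p$ automatically has prime period $3$. Hence it suffices to produce a single nonzero zero of $\Phi(p):=F^3(p)-p$.

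Write $A:=g'(0)\neq 0$ and deform $g$ to its linear part through $g_s(t):=A\,t+s\,(g(t)-A\,t)$, $s\in[0,1]$, so that $g_0(t)=A\,t$ and $g_1=g$; every $g_s$ satisfies $g_s(0)=0$ and $g_s'(0)=A\neq 0$. Let $F_s$ be the corresponding map and $\Phi(p,s):=F_s^3(p)-p$, a polynomial in $(p,s)$. Because the terms of $g_s$ of order $\geq 2$ do not contribute to the differential at the origin, $DF_s(0)$ is independent of $s$ and, being $\lambda I$ plus a nilpotent matrix, has all its eigenvalues equal to $\lambda$; thus for every $s$ the origin is a nondegenerate zero of $\Phi(\cdot,s)$ with local index $\operatorname{sign}\det\big(DF_s^3(0)-I\big)=\operatorname{sign}\big((\lambda^3-1)^3\big)=-1$ (here $\ab{\lambda}<1$ forces $\lambda^3-1<0$). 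At $s=0$ the preceding lemma provides a nonzero period-$3$ point $p_0$ at which $DF_0^3(p_0)$ has no eigenvalue equal to $1$; equivalently $D_p\Phi(p_0,0)$ is invertible, so $p_0$ is an isolated, nondegenerate zero of $\Phi(\cdot,0)$ and the implicit function theorem yields a real-analytic branch $s\mapsto p(s)$ of zeros with $p(0)=p_0$.

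To reach $s=1$ I would argue by degree. Suppose one establishes a bound $R>0$, uniform in $s\in[0,1]$, such that every zero of $\Phi(\cdot,s)$ lies in the open ball $B_R$ and none lies on $\partial B_R$. Then $\deg(\Phi(\cdot,s),B_R,0)$ is independent of $s$, and since the origin contributes the index $-1$ for all $s$, the sum $\sigma$ of the local indices of the nonzero period-$3$ points of $F_s$ is a constant independent of $s$. At $s=0$ this sum is nonzero --- indeed it equals the index $\pm 1$ of $p_0$ once one checks that $p_0$ is the only nonzero period-$3$ point of the linear map $F_0$ --- so at $s=1$ the map $F=F_1$ must again possess a nonzero period-$3$ point, which is the assertion.

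The main obstacle is precisely the uniform a priori bound at infinity. As $s$ leaves $0$ the degree of $g_s$ (hence of $F_s$) increases, so period-$3$ points may in principle enter from infinity and the continued branch $p(s)$ could escape before $s=1$; ruling this out amounts to controlling the zero set of $\Phi(\cdot,s)$ on the sphere at infinity. I would handle this by passing to the projective closure and performing the blow-up analysis of $F_s$ near infinity, uniformly in $s$, in the same spirit as the computation carried out in the proof of Theorem~\ref{teo1li}, together with the elementary but necessary verification of the index count for the base map $F_0$. Once these two points are secured, the degree argument closes the proof.
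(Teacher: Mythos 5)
Your reduction to the linear case via the implicit function theorem is exactly the first half of the paper's argument: the paper sets $G(A_2,\dots,A_k,x,y,z)=F^3(x,y,z)-(x,y,z)$, uses the lemma that $DF_0^3(x_0,y_0,z_0)$ has no eigenvalue $1$, and obtains a period-three point for all $(A_2,\dots,A_k)$ with $\max\{|A_2|,\dots,|A_k|\}<\varepsilon$. The problem is everything after that. Your plan to reach $s=1$ by a homotopy-invariant degree count hinges on a uniform a priori bound: no zeros of $\Phi(\cdot,s)$ on $\partial B_R$ for all $s\in[0,1]$. You explicitly do not prove this ("Suppose one establishes a bound $R>0$\dots", "I would handle this by\dots"), and it is precisely the hard part: for polynomial homotopies in which the degree of the map changes with the parameter, solutions escaping to infinity is the expected behaviour, not a pathology to be ruled out by soft arguments, and the blow-up computation you gesture at is not carried out. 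A second unverified ingredient is the claim that the total index of the \emph{nonzero} period-three points of $F_0$ is nonzero; $F_0$ is quadratic, so $F_0^3$ has degree $8$ and may have many period-three orbits whose indices could in principle cancel, and you only "check" this parenthetically. As written, the proposal is a proof strategy with its two load-bearing steps missing.

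The paper closes the argument with a trick that makes the global continuation unnecessary, and it is worth internalizing: conjugate $F$ by the dilation $T(x,y,z)=a^{-1}(x,y,z)$. Then $T\circ F\circ T^{-1}$ has the same normal form with $g$ replaced by $\tilde g(t)=a^{-1}g(at)=A\,t+A_2\,a\,t^2+\cdots+A_k\,a^{k-1}\,t^k$ (and $\alpha$ replaced by $\alpha a$). Since having a nonzero period-three point is invariant under conjugation by a linear isomorphism, one only ever needs the statement for maps whose higher coefficients are arbitrarily small, i.e.\ inside the implicit function theorem neighbourhood already obtained. In other words, the correct move is not to push the parameter all the way to its given value, but to observe that the given map is linearly conjugate to one with small parameters. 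I recommend you either adopt this rescaling step, or, if you want to keep the degree-theoretic route, actually supply the compactness estimate at infinity and the index computation for $F_0$ — without them the argument does not stand.
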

\begin{proof} Assume that $ g(t) = A \, t + A_2 \, t^2 + \dots + A_k \, t^k $, with $ A \neq 0 $. If $ (A_2,\dots,A_k) = (0,\dots,0) ,$
then $ F $ will be denoted $ F_0 $. Therefore, $ F_0 $ has a
periodic point of period three $ (x_0,y_0,z_0) \neq (0,0,0) $ and
the eigenvalues of $ D F_0^3(x_0,y_0,z_0) $ are other than from 1.
Consider the map $ G : \R^{k-1}\times \R^3 \to \R^3 $ defined by
$$ G(A_2,\dots,A_k,x,y,z) \ig F^3(x,y,z) \re (x,y,z) \, . $$
Note that $ G(0,\dots,0,x,y,z) = F_0^3(x,y,z) - (x,y,z) $, for all $
(x,y,z) \in \R^3 $. Then $ G(0,\dots,0,x_0,y_0,z_0) = (0,0,0) $ and
$ D_2 \, G(0,\dots,0,x_0,y_0,z_0) $ is invertible. By the Implicit
Function Theorem, there exists $ \varepsilon > 0 $ such that, for
all $ (A_2,\dots,A_k) $ with $ \max\{|A_2|,\dots,|A_k|\} <
\varepsilon ,$ the map $ F(A_2,\dots,A_k,.,.,.) $ has a periodic
point of period three. In the general case, note that if  $ a \in
\R-\{0\} $ and $ T(x,y,z) = a^{-1} \, (x,y,z) $, then
\begin{eqnarray*}
T(F(T^{-1}(u,v,w) & = & \lambda \, (u,v,w) \su (0, v_1 \, w, 0) \su  \\
 & & \tilde g(t) \, (1, -(b_1 + 2v_1 \tilde \alpha \, u), \tilde \alpha \,  \tilde g(t))
 \end{eqnarray*}
where $ \, \tilde \alpha = \, \alpha a \, $ and
$$ \tilde g(t) \ig  a^{-1} \, g(a \, t) \ig A \, t + A_2 \, a \, t^2  + \dots +
A_k \,  a^{k-1} \, t^k \, .$$ For $ |a| $ sufficiently small, the
map $ T \circ F \circ T^{-1} $ has a non--vanishing periodic point
of period three and, consequently, so does $F.$
\end{proof}

\end{document}